%Copy of 17_12_27 for Arxiv 
\documentclass[11pt,leqno]{amsart}

\usepackage[top=2.5 cm,bottom=2 cm,left=2.5 cm,right=2 cm]{geometry}
\usepackage[utf8]{inputenc}

\usepackage{esint}
\usepackage{graphicx}
\usepackage{enumerate}
%\usepackage{hyperref}

%%%macro Andrea:
\delimitershortfall=-0.1pt
%%%

\newtheorem{definition}{Definition}[section]
\newtheorem{theorem}{Theorem}[section]
\newtheorem{lemma}{Lemma}[section]
 
\newtheorem{remark}{Remark}[section]
\newtheorem*{maintheorem*}{Main Theorem}
\allowdisplaybreaks
\numberwithin{equation}{section}

\renewcommand{\i}{\ifmmode\mathit{\mathchar"7010 }\else\char"10 \fi}
\renewcommand{\j}{\ifmmode\mathit{\mathchar"7011 }\else\char"11 \fi}
\newcommand{\R}{\mathbb{R}}
\newcommand{\N}{\mathbb{N}}

\newcommand{\sgn}[1]{\mathrm{sign}\left(#1\right)}

\newcommand{\norm}[1]{\left\|#1\right\|}

\newcommand{\dv}[1]{\mathrm{div}\left(#1\right)}

\newcommand{\pt}{\partial_t}

\newcommand{\px}{\partial_x }

\newcommand{\p}{\partial}

\newcommand{\vfi}{\varphi}
\newcommand{\eps}{\varepsilon}

\newcommand{\Hneg}{H_{loc}^{-1}}
\newcommand{\Lip}{{\rm Lip}}

\def\begi{\begin{itemize}}
\def\endi{\end{itemize}}
\def\bega{\begin{array}}
\def\enda{\end{array}}

\def\bel{\begin{equation}\label}
\def\eeq{\end{equation}}

\newcommand{\ue}{u_\eps}
\newcommand{\ume}{u_{\mu,\eps}}

\newcommand{\uek}{u_{\eps_{k}}}

\newcommand{\CL}{\mathcal{L}}

\newenvironment{Assumptions}% Definition of assumptions
{%

\begin{enumerate}}%
{\end{enumerate}}

\newenvironment{Assumptionss}% Definition of assumptions
{%

\begin{enumerate}}%
{\end{enumerate}}

%
% Definition of definitions
{%

\begin{enumerate}}%
{\end{enumerate}}

\begin{document}

\title[Vanishing Viscosity Limits]{Vanishing Viscosity Limits of Scalar Equations\\ with Degenerate Diffusivity}

\author[G. M. Coclite]{G. M. Coclite}
\address[Giuseppe Maria Coclite]
{\newline Department of Mechanics, Mathematics and Management, Polytechnic University of Bari, via E. Orabona 4, 70125 Bari,   Italy}
\email[]{giuseppemaria.coclite@poliba.it}
\urladdr{http://www.dm.uniba.it/Members/coclitegm/}

\author[A. Corli]{A. Corli}
\address[Andrea Corli]
{\newline Department of Mathematics and Computer Science, University of Ferrara, Via Machiavelli 30, I--44121 Ferrara, Italy.}
\email[]{andrea.corli@unife.it}
\urladdr{http://docente.unife.it/andrea.corli}

\author[L. di Ruvo]{L. di Ruvo}
\address[Lorenzo di Ruvo]
{\newline Department of Mathematics, University of Bari, via E. Orabona 4, 70125 Bari,   Italy}
\email[]{lorenzo.diruvo77@gmail.it}

\date{\today}

\subjclass[2010]{35K65, 35L65, 35B25}

\keywords{Degenerate Diffusivity, Vanishing Viscosity Limit, Entropy Solutions, Conservation Laws, Well-posedness}

\thanks{The authors are members of the Gruppo Nazionale per l'Analisi Matematica, la Probabilit\`a e le loro Applicazioni (GNAMPA) of the Istituto Nazionale di Alta Matematica (INdAM).\\
This work was initiated while G.~M.~Coclite visited Department of Mathematics and Computer Science at the University of Ferrara.
He is grateful for Department's financial support and excellent working conditions.}

\begin{abstract} We consider a scalar, possibly degenerate parabolic equation with a source term, in several space dimensions. For initial data with bounded variation we prove the existence of solutions to the initial-value problem. Then we show that these solutions converge, in the vanishing-viscosity limit, to the Kruzhkov entropy solution of the corresponding hyperbolic equation. The proof exploits the $H$-measure compactness in several space dimensions.
\end{abstract}

\maketitle

%\tableofcontents

\section{Introduction}
\label{sec:intro}

In this paper we consider the scalar equation
\begin{equation}
\label{eq:sd-sola}
\pt u+\dv{f(u)}=\eps\Delta A(u)+g(t,x,u),
\end{equation}
for $t>0$ and $x\in\R^N$. The functions $f$, $A$ and $g$ are supposed to be smooth and $\eps>0$ is given. In the case of smooth solutions we also write the diffusion term as $\eps\Delta A(u) = \eps\dv{a(u)\nabla u}$, for $a=A'$. The term $\eps a$ (and also $a$, with a slight abuse of terminology) is called the diffusivity.

Reaction-diffusion-advection equations as \eqref{eq:sd-sola} are known to model a great variety of physical and biological phenomena. In particular, several examples can be found in \cite{GK,Kalashnikov,Murray,Vazquez} in the case the diffusivity $a\ge0$ vanishes at some point or even in a set with positive measure. 

Another example, which motivated this research, comes from the modeling of collective movements, where $N=1$. In the case of traffic flows, the conservation law $\pt u + \dv{uv(u)}=0$ takes the name of Lighthill-Whitham-Richards (LWR) equation \cite{Lighthill-Whitham, Richards}; in this case the function $u$ is valued in $[0,1]$ and plays the role of a normalized vehicle density, while $v$ represents the velocity. The term $\eps\Delta A(u)=\eps A(u)_{xx}$ was then introduced to avoid the occurrence of discontinuities in the solution \cite{Lighthill-Whitham,Payne}, while the term $g$ is motivated by the presence of entries or exits in the road \cite{Bagnerini-Colombo-Corli}. The presence of the diffusion is also motivated in \cite{Nelson_2000} by taking into consideration an anticipation distance $\eps$ (and possibly a relaxation time); in the case of crowd dynamics, $\eps$ is interpreted as the characteristic depth of the visual field of pedestrians \cite{BTTV}. In both cases, equation $\eqref{eq:sd-sola}$ is formally deduced from the LWR equation by a first-order Taylor expansion of the density $u$ with respect to $\eps$. The interesting feature is that in this expansion the diffusivity is computed as $a(u)=-uv'(u)$ and in particular {\em vanishes at $u=0$}. Several related models of diffusion are provided in \cite{Bellomo-Delitala-Coscia}. 

Two issues gave rise to the present paper. First, the existence and uniqueness of solutions to the initial-value problem for \eqref{eq:sd-sola} with initial datum
\begin{equation}\label{e:id}
u(0,x)=u_0(x),\quad x\in\R^N,
\end{equation}
for a {\em degenerate} diffusivity $a$; second, the {\em convergence} for $\eps\to0$ of the solutions $u_\eps$ of \eqref{eq:sd-sola}, \eqref{e:id} to the entropy solution of the balance law
\begin{equation}
\label{eq:sd-solac}
\pt u+\dv{f(u)}=g(t,x,u),
\end{equation}
having the same initial datum \eqref{e:id}. The aim of the program is to provide a rigorous justification to the approximation procedure in \cite{BTTV, Nelson_2000}.

\smallskip

Several papers dealt with both issues; we now briefly quote only those that enter in the framework of this paper, referring the reader to their references for further information. 

We first discuss the papers dealing with the general case $N\ge1$. In the fundamental and very technical paper \cite{Volpert-Hudjaev} the smooth functions $f$, $A$ and $g$ are allowed to depend also on $t,x$; moreover, only the assumption $a\ge0$ is required. There, the authors show the existence of $BV$ (bounded variation) entropy solutions by the vanishing viscosity method. They also obtain some results about uniqueness; in the case of one spatial dimension a complete uniqueness result in the $BV$ class is provided in \cite{Wu-Yin}, see also \cite{Volpert2000}. We refer to \cite{Kalashnikov} for related results. If $f=f(x,u)$ satisfies suitable assumptions but its spatial dependence is merely continuous (non-Lipschitz) we refer to \cite{Karlsen-Risebro}, where some fundamental results of \cite{Carrillo} are exploited. 
%The initial-boundary value problem is considered, in the case the diffusion is not degenerate, in the recent paper \cite{Mondal-Ganesh-Baskar}, by exploiting suitable $BV$ estimates, and in \cite{Mondal-Ganesh}, via the compensated compactness \footnote{E allora bisogna giustificare perche' loro non hanno bisogno di Panov ma della teoria classica della CC! Nota inoltre che non hanno NESSUNA ipotesi di convessita' o genuina non linearita' su $f$! {\bf Giuseppe: Gli articoli 26, 27 e 28 sono scritti veramente maluccio. Le ipotesi non sono affatto chiare. Sembra che si ridimostrino la compattezza compensata. Io non li citerei nemmeno. Aspettiamo che passino per le mani di un referee e vediamo cosa ne esce fuori.}}; see also \cite{Mondal-Ganesh3} for a refinement of these results. 

The case $N=1$ offers much more results, in particular because the theory of compensated compactness was available in that case since the late seventies. In the case $g=g(t,x)$, if $f,A$ are barely continuous, $A$ weakly increasing and $u_0, g$ are non-smooth, then existence and uniqueness of solutions is obtained in \cite{Benilan-Toure_CRAS, Benilan-Toure_IHP}  by the nonlinear semigroup theory; the limit $\eps\to0$ is also studied.  If $g=0$, the case when the diffusion is $\eps\left(b(u_x)\right)_x$ is considered in \cite{Marcati-Natalini} under suitable assumptions on both $f$ and $b$. If $a$ vanishes identically on an interval $[a,b]$, $a<b$, and it is strictly positive elsewhere, see \cite{Marcati}. The convergence of absolutely continuous solutions of \eqref{eq:sd-sola} to a solution of \eqref{eq:sd-solac} is showed in \cite{Marcati_1988}. 

Other results, again in the case $N=1$, are the following. In \cite{Jager-Lu}, under the assumptions that $a$ vanishes at most in zero-measure set and $g=0$, the authors show that if $f$ and $A$ are smooth and $\norm{u_0}_{L^1}\le M$, then there is a unique weak continuous solution satisfying the regularity estimate $\norm{A(u)}_{L^1}\le M$. Other results are given in \cite{Gilding} in the case that $u_0,u$ are continuous. The case when $f=f(x,u)$ depends in a discontinuous way on $x$ is considered in \cite{Karlsen-Risebro-Towers_2003}. Existence and uniqueness of $BV$ solutions to an initial-boundary value problem was proved in \cite{Burger-Wendland} by exploiting the techniques of \cite{Volpert-Hudjaev}, if $a$ is strictly positive on a subinterval and vanishes elsewhere; the limit $\eps\to0$ is considered as well. At last, traveling-wave solution for equation \eqref{eq:sd-sola} have been studied in \cite{GK}; we refer to \cite{Corli-Malaguti, Corli-diRuvo-Malaguti} for applications to collective movements in the case $g=g(u)$. 

\smallskip

The main results of this papers are two. First, for $\eps$ fixed, we prove the well-posedness of  \eqref{eq:sd-sola}-\eqref{e:id} by compactness. Then we consider the limit $\eps\to0$ and show that the solutions of \eqref{eq:sd-sola}-\eqref{e:id} converge to the entropy solution of \eqref{eq:sd-solac}-\eqref{e:id}; in this second part we exploit the recent extension of the theory of $H$-measures to $\R^N$ due to Panov \cite{Panov-err, Panov}. In both cases we aim at providing {\em simple} proofs; in particular we avoid the deep but heavy techniques of \cite{Volpert-Hudjaev} while keeping however a sufficiently high level of generality.

Our results can be extended to the more general equation
\begin{equation*}
\pt u+\dv{f(t,x,u)}=\eps\Delta A(t,x,u) + g(t,x,u)
\end{equation*}
by exploiting the techniques of \cite{CKMR,  Panov-err, Panov} However, we always refer to equation \eqref{eq:sd-sola}.

%%%%%%%%%%%%%%%%%%%%%%%%%%%%%%%%%%%%%%%

\section{Main results}
\label{sec:main}

We fix $T>0$ and denote $R_T:=[0,T]\times \R^N$. We consider the initial-value problem
\begin{equation}
\label{eq:sd}
\begin{cases}
\pt u+\dv{f(u)}=\eps\Delta A(u)+g(t,x,u),&\quad t\in(0,T],\,x\in\R^N,\\
u(0,x)=u_0(x),&\quad x\in\R^N.
\end{cases}
\end{equation}
We denote by $\Lip(E)$ the set of Lipschitz-continuous functions in a set $E$ and make the following assumptions:
\begin{Assumptions}

\item \label{ass:f} $f\in \Lip\left([0,1];\R^N\right)$, $f(0)=0$;

\item \label{ass:a} $A\in \Lip\left([0,1]\right)$, $A'(u)\ge0$ for a.e. $u\in[0,1]$;
 
\item \label{ass:g}  $g(\cdot,x,\cdot)\in \Lip([0,T]\times [0,1])$ for a.e. $x\in\R^N$,  $g(t,\cdot,u)\in (L^1\cap BV)(\R^N)$ for every $(t,u)\in[0,T]\times [0,1]$ and satisfies, for some constant $\kappa>0$,\begin{align}
& g(\cdot,\cdot,1)\le 0\le g(\cdot,\cdot,0),
\label{e:g01}
\\
&\left|g_u(t,x,u)\right|\le \kappa, \quad \hbox{ for a.e. } (t,x,u)\in R_T\times[0,1],
\label{e:gu}
\\
&\sup_{(t,u)\in[0,T]\times[0,1]}
\left\{
\int_{\R^N} \left|g(t,x,u)\right| dx,
\ \left|Dg(t,\cdot, u)\right|(\R^N),
\int_{\R^N} \left|g_t(t,x,u)\right|\, dx\right\}\le \kappa;
\label{e:gdu}
\end{align}

\item \label{ass:u0} $u_0\in L^1(\R^N)$ and $0\le u_0\le 1$ a.e. in $\R^N$.
\end{Assumptions}
Assumption \eqref{e:g01} is only exploited to deduce that solutions are valued in $[0,1]$ a.e. if ({\bf H.4}) holds. In the case $N=1$, an example of source term $g$ satisfying \eqref{e:g01} and modeling either entries or exits in a road is \cite{Bagnerini-Colombo-Corli}
\[
g(t,x,u) = \chi_I(x)h(t,u),
\]
where $\chi_I$ is the characteristic function of some bounded  open interval  $I\subset \R^N$ and $N=1$; the interval $I$ may be replaced by the union of disjoint bounded connected open sets. The function $h\in\Lip\left([0,T]\times [0,1]\right)$ satisfies $h(t,0)>0=h(t,1)$ and $h\ge0$ in the case of an entry, $h(t,0)=0>h(t,1)$ and $h\le0$ in the case of an exit, $h(t,1)<0<h(t,0)$ in the case of an inflow-outflow access.

We denote by $C_c^\infty([0,T)\times\R^N)$ the set of functions $\vfi\in C^\infty([0,T)\times\R^N)$ with compact support.
\begin{definition}
\label{def:sol-sd} 
We say that $u\in  L^\infty_{loc}\left(R_T\right)$ is a weak solution of the initial-value problem \eqref{eq:sd} if for every test function $\vfi\in C_c^\infty([0,T)\times\R^N)$
we have
\begin{equation}
\label{eq:def-weak-sd}
\int_0^\infty\int_{\R^N}\left( u\pt \vfi+f(u)\cdot\nabla \vfi+\eps A(u)\Delta \vfi+g(t,x,u)\vfi\right) dtdx+\int_{\R^N} u_0(x)\vfi(0,x)dx=0.
\end{equation}
Moreover, we say that $u$ is an entropy solution of \eqref{eq:sd} if it is a weak solution of \eqref{eq:sd} and for every nonnegative test function $\vfi\in C_c^\infty([0,T)\times\R^N)$ and every convex entropy $\eta\in C^2(\R)$ we have
\begin{equation}
\label{eq:def-entr-sd}
\begin{split}
\int_0^\infty\int_{\R^N}\left( \eta(u)\pt \vfi+q(u)\cdot\nabla\vfi+\eps \mathcal{A}(u)\Delta \vfi+g(t,x,u)\eta'(u)\vfi\right) dtdx&\\
+\int_{\R^N} \eta\left(u_0(x)\right)\vfi(0,x)dx&\ge 0,
\end{split}
\end{equation}
where the entropy flux $q\in C^2(\R;\R^N)$ and the entropy diffusion $\mathcal{A}\in \Lip(\R)$ satisfy
\begin{equation}
\label{eq:def-q-A-sd}
q'=\eta'f',\qquad \mathcal{A}'=\eta'A'
\end{equation}
 for a.e. $(t,x)\in R_T$.
 \end{definition}

Notice that we can locally approximate the usual Kruzhkov entropies with $C^2$ convex functions in a uniform way.

In this paper we study two problems:
\begin{itemize}

\item first, we keep $\eps>0$ fixed and prove the existence and uniqueness of weak entropy solutions to the initial-value problem \eqref{eq:sd};

\item second, we consider the limit $\eps\to0$ in \eqref{eq:sd} and show the convergence of the solutions $u_\eps$ of \eqref{eq:sd} to the unique entropy solution of the initial-value problem for the corresponding balance law
\begin{equation}
\label{eq:cl}
\begin{cases}
\pt u+\dv{f(u)}=g(t,x,u),&\quad t\in(0,T],\,x\in\R^N,\\
u(0,x)=u_0(x),&\quad x\in\R^N.
\end{cases}
\end{equation}

\end{itemize}
Here follow our main results. Below, we denote by $BV(\R^N)$ the space of functions with bounded variation and by $|Du|$ the total variation of $u\in BV(\R^N)$, see \cite{Ambrosio-Fusco-Pallara}; $Du$ is a finite Radon measure in $\R^N$. The space of finite Radon measures on $\R^N$ ($R_T$) is denoted by $\mathcal{M}(\R^N)$ ($\mathcal{M}(R_T)$, respectively).

\begin{theorem}
\label{th:main-sd}
Assume {\rm\ref{ass:f}}, {\rm\ref{ass:a}}, {\rm\ref{ass:g}}, {\rm\ref{ass:u0}} and 
\begin{equation}
\label{eq:ass-u0}
u_0\in BV(\R^N),\qquad \nabla A(u_0)\in BV(\R^N).
\end{equation}
Then, the Cauchy problem \eqref{eq:sd}  admits a unique entropy solution $u$ in the sense of Definition \ref{def:sol-sd}. The solution has the following properties: 
\begin{align}
\label{eq:th-main-sd1.1}
&0\le u(\cdot,\cdot)\le 1\ a.e.\ in\ R_T,\qquad u\in L^\infty\left(0,T; L^1(\R^N)\right)\cap BV(\R_T),\\
\label{eq:th-main-sd1.2}
&\norm{u(t,\cdot)-u(s,\cdot)}_{L^1(\R^N)}\le L |t-s|,\qquad \hbox{ for a.e. } 0\le t,\,s\le T,
\\
\label{eq:th-main-sd1.3}
&\nabla A(u)
\in L^\infty\left(0,T;  BV(\R^N)\right),
\end{align}
for some constant $L>0$ depending on $u_0$.
Moreover, if $u$ and $v$ are the entropy solutions of \eqref{eq:sd} corresponding to the initial data $u_0$ and $v_0$, we have
\begin{equation}
\label{eq:th-main-sd2}
\norm{u(t,\cdot)-v(t,\cdot)}_{L^1(\R^N)}\le e^{\kappa t} \norm{u_0-v_0}_{L^1(\R^N)},
\end{equation}
for almost every $0\le t\le T$.
\end{theorem}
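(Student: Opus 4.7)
The plan is a double vanishing-viscosity existence argument followed by an $L^1$-contraction estimate via Kruzhkov's doubling of variables adapted to the degenerate diffusion; uniqueness is then a consequence of the contraction. Every property in \eqref{eq:th-main-sd1.1}--\eqref{eq:th-main-sd1.3} will first be obtained as a uniform estimate at an approximate level and inherited in the limit.

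I would begin by regularizing the degeneracy: for $\mu>0$ set $A_\mu(u):=A(u)+\mu u$, so that $A_\mu'\ge\mu$, and mollify $u_0$ to obtain $u_{0,\mu}\in C^\infty\cap L^1\cap BV$ with $\nabla A(u_{0,\mu})$ bounded in $BV$ uniformly in $\mu$. The Cauchy problem
\begin{equation*}
\pt u_{\mu}+\dv{f(u_\mu)}=\eps\Delta A_\mu(u_\mu)+g(t,x,u_\mu),\qquad u_\mu(0,\cdot)=u_{0,\mu},
\end{equation*}
is uniformly parabolic with smooth data, so classical quasilinear parabolic theory yields a smooth solution $u_\mu$. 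Assumption \eqref{e:g01} makes $0$ a subsolution and $1$ a supersolution, so the maximum principle gives $0\le u_\mu\le 1$.

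I would then derive four a priori estimates, uniform in $\mu$. The $L^\infty(0,T;L^1)$ bound follows by testing the equation against a smooth approximation of $\mathrm{sign}(u_\mu)$ and using $\|g(t,\cdot,u)\|_{L^1}\le\kappa$ together with Gronwall. The spatial $BV$ bound is obtained by testing the translated-difference equation with the sign of $u_\mu(\cdot,\cdot+h)-u_\mu(\cdot,\cdot)$, invoking \eqref{e:gu} and the bound on $|Dg(t,\cdot,u)|(\R^N)$ from \eqref{e:gdu}. The Lipschitz-in-time estimate \eqref{eq:th-main-sd1.2} comes from reading $\pt u_\mu$ off the equation and estimating each term on the right in $\M(\R^N)$ using the spatial $BV$ bound and \eqref{e:gdu}. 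Finally, the regularity $\nabla A_\mu(u_\mu)\in L^\infty(0,T;BV)$ is obtained by differentiating the equation in $x_i$, multiplying by the sign of $\partial_{x_i}A_\mu(u_\mu)$, summing over $i$, and exploiting the hypothesis $\nabla A(u_0)\in BV$ together with the estimates already obtained. Helly's selection theorem applied in the space variable, combined with the time Lipschitz estimate, extracts a subsequence converging a.e.\ to a limit $u$; since $A_\mu(u_\mu)\to A(u)$ strongly and $\mu u_\mu\to 0$, the weak formulation \eqref{eq:def-weak-sd} and the entropy inequality \eqref{eq:def-entr-sd} pass to the limit, proving existence and \eqref{eq:th-main-sd1.1}--\eqref{eq:th-main-sd1.3}.

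The genuine difficulty lies in \eqref{eq:th-main-sd2} (which also entails uniqueness), due to the degenerate diffusion in several space dimensions. I would use Carrillo's adaptation of Kruzhkov's doubling of variables: for two entropy solutions $u(t,x)$ and $v(s,y)$, write the Kruzhkov entropy inequalities with entropies $|u-k|$ and $|v-k|$, set $k=v(s,y)$ and $k=u(t,x)$ respectively, sum, and test with a product $\rho_\delta(t-s,x-y)\psi(t,x)$ of a standard mollifier and a non-negative cut-off. The critical diffusion contribution is of the form $\eps\iint\left|A(u)-A(v)\right|\Delta_x\psi\,\rho_\delta$ plus a ``cross'' term, which is handled by transferring one derivative and exploiting the $BV$ regularity of $\nabla A(u),\nabla A(v)$ established above, together with the monotonicity of $A$; the source contribution is bounded by $\kappa\iint|u-v|\psi$ thanks to \eqref{e:gu}. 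Sending $\delta\to 0$ and replacing $\psi$ by an approximation of $\chi_{\R^N}$ yields a Kato-type differential inequality for $\norm{u(t,\cdot)-v(t,\cdot)}_{L^1(\R^N)}$, and Gronwall produces the exponential factor $e^{\kappa t}$ in \eqref{eq:th-main-sd2}. Uniqueness follows by taking $v_0=u_0$.
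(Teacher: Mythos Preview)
Your overall architecture---nondegenerate parabolic regularization, uniform a~priori bounds, compactness, and Kruzhkov/Carrillo doubling for the $L^1$-contraction---matches the paper's. The uniqueness sketch is fine and is essentially what the authors do.

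There is, however, a genuine circularity in your derivation of the time-Lipschitz bound and of $\nabla A(u)\in L^\infty(0,T;BV)$. You propose to read $\pt u_\mu$ directly off the equation and bound each right-hand term in $\mathcal M(\R^N)$ using only the spatial $BV$ bound. But the term $\eps\Delta A_\mu(u_\mu)$ is second order: the spatial $BV$ estimate on $u_\mu$ controls $\nabla A_\mu(u_\mu)$ in $\mathcal M$, not its divergence. So this step presupposes precisely the bound $\nabla A_\mu(u_\mu)\in L^\infty(0,T;BV)$ that you postpone to the next paragraph. Conversely, your proposed direct argument for that bound---differentiate in $x_i$ and multiply by $\mathrm{sign}\big(\partial_{x_i}A_\mu(u_\mu)\big)$---does not yield control of second derivatives of $A_\mu(u_\mu)$: since $A_\mu'>0$ one has $\mathrm{sign}\big(\partial_{x_i}A_\mu(u_\mu)\big)=\mathrm{sign}(\partial_{x_i}u_\mu)$, and the computation simply reproduces the spatial $BV$ estimate for $u_\mu$.

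The paper breaks this loop by reversing the logic. It differentiates the equation in $t$, multiplies by $\mathrm{sign}(\pt u_\mu)$, and obtains via Gronwall
\[
\|\pt u_\mu(t,\cdot)\|_{L^1(\R^N)}\le \|\pt u_\mu(0,\cdot)\|_{L^1(\R^N)}\,e^{\kappa t}+e^{\kappa t}-1.
\]
The crucial point is that only the \emph{initial} value $\|\pt u_\mu(0,\cdot)\|_{L^1}$ needs the second-order term, and there the equation at $t=0$ together with the hypothesis $\nabla A(u_0)\in BV$ (and the regularization choices $\|A_\mu(u_{0,\mu})''\|_{L^1}\le |D(\nabla A(u_0))|(\R^N)$, $\mu\|u_{0,\mu}''\|_{L^1}\le\kappa_0$) closes the estimate. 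Once $\pt u\in L^\infty(0,T;\mathcal M)$ is in hand, \eqref{eq:th-main-sd1.3} is obtained \emph{a posteriori} by rewriting the equation as $\eps\,\dv{\nabla A(u)}=g-\pt u-\dv{f(u)}$ and noting that the right-hand side now lies in $L^\infty(0,T;\mathcal M)$. Replace your two steps by this argument and the rest of your proof goes through.
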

The proof of Theorem \ref{th:main-sd} is based on a nondegenerate parabolic regularization of equation \eqref{eq:sd}.

\begin{remark} By \eqref{eq:th-main-sd1.3} we deduce that $A(u)$ is locally Lipschitz-continuous with respect to $x$ for a.e. $t\in(0,T]$. If $A$ is strictly monotone, then it is invertible and $A^{-1}$ is continuous; as a consequence, we have that $u=A^{-1}\left(A(u)\right)$ is continuous with respect to $x$ for a.e. $t\in(0,T]$. Moreover, $u(t,\cdot)\in\Lip(\R^N)$ for a.e. $t\in(0,T]$.
\end{remark}

\smallskip

Next, we consider the Cauchy problem \eqref{eq:sd} where the initial data are allowed to depend on $\eps$, namely,
\begin{equation}
\label{eq:sd1}
\begin{cases}
\pt \ue+\dv{f(\ue)}=\eps\Delta A(u_\eps) + g(t,x,\ue),&\quad t\in(0,T],\,x\in\R^N,\\
\ue(0,x)=u_{0,\eps}(x),&\quad x\in\R^N,
\end{cases}
\end{equation}
and investigate the limit $\eps\to0$. To this aim, beside \ref{ass:f}--\ref{ass:u0} we need one further assumption. 
%By denoting with ${\rm meas}$ the Lebesgue measure in $\R$, 
We require:
\begin{Assumptionss}

\item \label{ass:f'} for every  $\xi\in \R^N$ the map $u\in[0,1]\mapsto f(u)\cdot\xi$ is not affine on any nontrivial intervals.

\end{Assumptionss}

In the case $N=1$ condition \ref{ass:f'} encompasses both the case of a convex flux function $f$, which occurs in several model of collective behaviors \cite{Garavello-Piccoli_book, Rosini_book}, as well as the case of a function $f$ with isolated inflection points \cite{Colombo-Rosini2005}. It does not hold, however, in intervals where $f(u) = uv(u)$, where the velocity $v(u)$ is a constant function of $u$; this happens in Dick-Greenberg model of traffic flow \cite{Dick, Greenberg}. 

On the initial datum $u_{0,\eps}$ we assume, for $u_0$ still satisfying assumption \ref{ass:u0},
\begin{align}\label{ass:u-0}
&0\le u_{0,\eps} \le 1\ \text{a.e. in} \ \R^N, \quad \norm{u_{0,\eps}}_{L^2(\R^N)}\le \norm{u_{0}}_{L^2(\R^N)},
\\
\label{ass-a}
 &u_{0,\eps}\to\, u_{0}\text{ a.e. and in $L^p_{loc}(\R^N),\, 1\leq p< \infty$, as $\eps\to0$}.
\end{align}
About \eqref{ass-a}, notice that if $u_{0,\eps}\to u_{0}$ in $L^p_{loc}(\R^N)$ for some $p$ with $1\leq p< \infty$, then $u_{0,\eps}\to u_{0}$ in $L^p_{loc}(\R^N)$ for every $p$ with $1\le p<\infty$ because of \eqref{ass:u-0}.

Solutions to the limit problem \eqref{eq:cl} are meant in the following sense.
\begin{definition}
\label{def:sol}
We say that $u\in  L^{\infty}\left(R_T\right)$ is an entropy solution of the initial-value problem \eqref{eq:cl} if for every test function $\vfi\in C_c^\infty([0,T)\times\R^N)$ we have
\begin{equation}
\label{eq:def-weak-sd1}
\int_0^\infty\!\!\!\int_{\R^N}\left( u\pt \vfi+f(u)\cdot\nabla \vfi+g(t,x,u)\vfi\right) dtdx+\int_{\R^N} u_0(x)\vfi(0,x)dx=0
\end{equation}
and, moreover, for every convex function $\eta\in  C^2(\R)$ and $q$ as in \eqref{eq:def-q-A-sd}, the inequality
\begin{equation}
\label{eq:OHentropy}
\int_0^\infty\!\!\!\int_{\R^N}\left( \eta(u)\pt \vfi + q(u)\cdot\nabla \vfi+g(t,x,u)\eta'(u)\vfi\right) dtdx
+\int_{\R^N} \eta\left(u_0(x)\right)\vfi(0,x)dx\ge 0
%\pt \eta(u)+ \px q(u)-\eta'(u)g(t,x,u) \le 0,% \qquad     q(u)=\int^u f'(\xi) \eta'(\xi)\, d\xi,
\end{equation}
holds for every nonnegative test function $\vfi\in C_c^\infty([0,T)\times\R^N)$. 
\end{definition}

Due to the boundedness of the solutions, our definition of entropy solution is equivalent to that of Kruzhkov \cite{Kruzhkov}. Indeed, we can always uniformly approximate the Kruzhkov entropies with $C^2$ convex functions. As a consequence, due to the uniqueness result proved in \cite{Kruzhkov}, entropy solutions in the sense of Definition \ref{def:sol} for \eqref{eq:cl} are unique.

Following \cite{CKMR, Panov-err, Panov}, we prove the following result.

\begin{theorem}
\label{th:main}
Under hypotheses {\rm\ref{ass:f}}, {\rm\ref{ass:a}}, {\rm\ref{ass:g}}, {\rm\ref{ass:u0}}, assume moreover {\rm\ref{ass:f'}}, \eqref{ass:u-0} and \eqref{ass-a}. Let $u_\eps$ be the corresponding solution of \eqref{eq:sd1} provided by Theorem \ref{th:main-sd}.
Then we have
\begin{equation}
\label{eq:convu-1}
u_\eps\to u\qquad \text{a.e. and in $L^p_{loc}\left(R_T\right),\,1\le p<\infty$, as $\eps\to0$},
\end{equation}
where $u$ is the unique entropy solution of \eqref{eq:cl} in the sense of Definition \ref{def:sol}.
\end{theorem}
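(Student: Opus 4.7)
The plan is to combine a uniform $\eps$-weighted energy estimate with a compensated-compactness (Murat) argument for entropy productions, to apply the $H$-measure compactness theorem of Panov in several space dimensions \cite{Panov-err, Panov} (see also \cite{CKMR}), and finally to identify the limit using Kruzhkov's uniqueness. Throughout I use freely the uniform $L^\infty$ bound $0\le u_\eps\le 1$ supplied by Theorem \ref{th:main-sd}.

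\emph{Step 1 (localized energy estimate).} I would multiply \eqref{eq:sd1} by $A(u_\eps)\psi(x)$ for a nonnegative $\psi\in C_c^\infty(\R^N)$ and integrate over $[0,T]\times\R^N$. Setting $U(u):=\int_0^u A(s)\,ds$ and $F(u):=\int_0^u f'(s)A(s)\,ds$, integration by parts recasts $\pt u_\eps\cdot A(u_\eps)\psi$ as $\pt U(u_\eps)\psi$, the advective term as $-F(u_\eps)\cdot\nabla\psi$, and the diffusive term as $-\eps|\nabla A(u_\eps)|^2\psi+\tfrac12\eps A(u_\eps)^2\Delta\psi$. Using that $A$, $f$, $U$ are bounded on $[0,1]$ and that the source satisfies \ref{ass:g}, this yields
\begin{equation*}
\eps\int_0^T\!\!\int_{\R^N}|\nabla A(u_\eps)|^2\,\psi\,dx\,dt \le C(\psi)
\end{equation*}
uniformly in $\eps$; in particular $\sqrt{\eps}\,\nabla A(u_\eps)$ is bounded in $L^2_{loc}(R_T)$.

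\emph{Step 2 ($H^{-1}$ compactness of entropy productions).} Let $\eta\in C^2(\R)$ be convex and let $q$, $\mathcal{A}$ be as in \eqref{eq:def-q-A-sd}. A direct computation, using $\mathcal A'=\eta'A'$, yields
\begin{equation*}
\pt\eta(u_\eps)+\dv q(u_\eps)=\eps\,\dv\!\bigl(\eta'(u_\eps)\nabla A(u_\eps)\bigr)-\eps\,\eta''(u_\eps)A'(u_\eps)|\nabla u_\eps|^2+g(t,x,u_\eps)\eta'(u_\eps).
\end{equation*}
By Step 1, the first term on the right-hand side tends to $0$ in $H^{-1}_{loc}(R_T)$. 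The second is a nonpositive distribution; testing the above identity against any nonnegative $\vfi\in C_c^\infty(R_T)$ and integrating by parts gives
\begin{equation*}
0\le\int\eps\,\eta''(u_\eps)A'(u_\eps)|\nabla u_\eps|^2\,\vfi\,dt\,dx = \eps\!\int\!\mathcal A(u_\eps)\Delta\vfi + \int\eta(u_\eps)\pt\vfi + \int q(u_\eps)\cdot\nabla\vfi + \int g\eta'(u_\eps)\vfi,
\end{equation*}
hence this family of nonnegative measures is bounded in $\mathcal{M}_{loc}(R_T)$. Finally, $g\eta'(u_\eps)$ is bounded in $L^1_{loc}\cap L^\infty_{loc}(R_T)$ by \ref{ass:g}. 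Since the left-hand side of the displayed identity is bounded in $W^{-1,\infty}_{loc}(R_T)$, Murat's lemma gives that $\pt\eta(u_\eps)+\dv q(u_\eps)$ is precompact in $H^{-1}_{loc}(R_T)$.

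\emph{Step 3 (strong compactness and identification of the limit).} The uniform $L^\infty$ bound, the $H^{-1}_{loc}$ compactness of the entropy productions for every $C^2$ convex $\eta$, and the non-degeneracy \ref{ass:f'} together match the hypotheses of Panov's $H$-measure theorem in several space dimensions \cite{Panov-err, Panov, CKMR}; that theorem yields strong precompactness of $\{u_\eps\}$ in $L^1_{loc}(R_T)$. Extracting $u_{\eps_k}\to u$ a.e.\ and in $L^p_{loc}(R_T)$ for every $p\in[1,\infty)$ (by dominated convergence), I would pass to the limit in \eqref{eq:def-weak-sd} and \eqref{eq:def-entr-sd} written for $u_{\eps_k}$: the viscous contributions vanish since $\eps_k A(u_{\eps_k})$ and $\eps_k\mathcal A(u_{\eps_k})$ tend uniformly to $0$, the continuity of $f$, $q$, $\eta$ together with dominated convergence handles the nonlinear terms, and \eqref{ass-a} identifies the initial trace. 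The limit $u$ is therefore an entropy solution of \eqref{eq:cl} in the sense of Definition \ref{def:sol}, and Kruzhkov uniqueness identifies it independently of the extracted subsequence, giving \eqref{eq:convu-1} for the whole family. The main technical obstacle I foresee is the invocation of Panov's theorem: verifying that \ref{ass:f'} is exactly the several-dimensional non-degeneracy that replaces the classical one-dimensional genuine-nonlinearity hypothesis, and that the $H^{-1}_{loc}$ compactness produced in Step 2 matches the functional-analytic setting of \cite{Panov-err, Panov}.
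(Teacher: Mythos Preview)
Your proposal is correct and follows essentially the same route as the paper: an $\eps$-weighted energy estimate, the entropy-production decomposition, Murat's lemma, Panov's $H$-measure compactness under \ref{ass:f'}, and Kruzhkov uniqueness to upgrade subsequential to full convergence. The only notable deviations are cosmetic---you obtain the energy bound by testing with $A(u_\eps)\psi$ rather than with $u_\eps$ as in Lemma~\ref{lm:linfty-sd1}, and you bound the dissipation measure by testing the entropy identity rather than tracking it from the $\mu$-approximation; be aware, however, that your ``direct computation'' of the entropy identity and the energy estimate are formal for the merely entropy solution $u_\eps$, and the paper justifies them precisely by returning to the smooth approximants $u_{\mu,\eps}$ of \eqref{eq:sd-mu} and passing to the limit.
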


%%%%%%%%%%%%%%%%%%%%%%%%%%%%%%%%%%%%%%%%%%%%%%
\section{Existence and uniqueness of solutions to \eqref{eq:sd}}
\label{sec:singdiff}

In this section we prove Theorem \ref{th:main-sd}. Our existence argument is based on the following regularization of problem \eqref{eq:sd}. Let $\mu>0$ be given; we consider families of functions $\{f_\mu\}$, $\{A_\mu\}$, $\{g_\mu\}$ in $C^\infty$ that approximate $f$, $A$, $g$, respectively, in the sense
\begin{align}
&f_\mu\to f,\ A_\mu\to A\ \text{uniformly in}\ [0,1],
\label{e:gmuconv0}
\\
&\sup_{u\in[0,1]}\int\!\!\!\int_{R_T}\left|g_\mu(t,x,u)-g(t,x,u)\right|dtdx\to0,
\label{e:gmuconv}
\end{align}
for $\mu\to0+$. Moreover, $f_\mu$ and $A_\mu$ satisfy \ref{ass:f} and \ref{ass:a}, respectively, where $f_\mu$ and $A_\mu$ replace $f$ and $A$; we call $\ref{ass:f}_\mu$ and $\ref{ass:a}_\mu$ such assumptions. Moreover,  
\begin{equation}\label{e:fextramu}
\|f_\mu'\|_{L^\infty([0,1])}\le \norm{f'}_{L^\infty([0,1])}.
\end{equation}
On the other hand, the functions $g_\mu$ satisfy the same assumptions \eqref{e:g01}--\eqref{e:gdu} of $g$, for $g_\mu$ replacing $g$; for sake of brevity we do not write again these assumptions for $g_\mu$ and in the following we shall quote them by $\eqref{e:g01}_\mu$--$\eqref{e:gdu}_\mu$. At last, we consider a family of functions $\{u_{0,\mu}\}\subset C_c^\infty(\R^N)$ satisfying
\begin{align}
\label{eq:ass-mu1}
&0\le u_{0,\mu}\le 1,
\\
\label{eq:ass-mu2}
&u_{0,\mu}\to u_0\quad \text{in $L^p(\R^N),\,1\le p<\infty$},\text{ for }\mu\to0,
\\
\label{eq:ass-mu345}
&\norm{u_{0,\mu}}_{L^1(\R^N)}\le \norm{u_{0}}_{L^1(\R^N)},\quad \norm{u_{0,\mu}}_{L^2(\R^N)}\le \norm{u_{0}}_{L^2(\R^N)},
\quad \norm{u_{0,\mu}'}_{L^1(\R^N)}\le |D(u_0)|(\R^N),
\\
\label{eq:ass-mu67}
&\norm{A_\mu(u_{0,\mu})''}_{L^1(\R^N)}\le \left|D(A(u_0)')\right|(\R^N),\quad \mu\norm{u_{0,\mu}''}_{L^1(\R^N)}\le \kappa_0,
\end{align}
for some constant $\kappa_0>0$ independent from $\eps$ and $\mu$.

Then the non-degenerate parabolic initial-value problem
\begin{equation}
\label{eq:sd-mu}
\begin{cases}
\pt u_\mu+\dv{f_\mu(u_\mu)} = \eps\Delta\left(A_\mu(u_\mu) + \mu u_\mu\right) + g_\mu(t,x, u_\mu),&\quad t>0,\,x\in\R^N,\\
u_\mu(0,x)=u_{0,\mu}(x),&\quad x\in\R^N,
\end{cases}
\end{equation}
has a unique solution $u_\mu\in C^\infty(R_T)\cap W^{1,1} (R_T)$. 
%ahi
\begin{lemma}[{\bf $L^\infty$, $L^1$ and $L^2$ estimates}]
\label{lm:linfty-sd}
For the solution $u_\mu$ of problem \eqref{eq:sd-mu} we have, for $t\in[0,T)$, 
\begin{align}
\label{eq:linfty-sd}
&0\le u_\mu\le 1,
\\
\label{eq:l1-sd}
&\norm{u_\mu(t,\cdot)}_{L^1(\R^N)}\le \norm{u_0}_{L^1(\R^N)}+\kappa t,\\
&\label{eq:l2-sd}
\norm{u_\mu(t,\cdot)}_{L^2(\R^N)}^2+2\eps\int_0^t \norm{\sqrt{a_\mu\left(u_\mu(s,\cdot)\right)+\mu}\ \nabla u_\mu(s,\cdot)}_{L^2(\R^N)}^2ds
\le\norm{u_0}_{L^2(\R^N)}^2+2\kappa t.
\end{align}
\end{lemma}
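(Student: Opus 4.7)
The plan is to establish the three estimates separately by testing the regularized equation \eqref{eq:sd-mu} against suitable functions of $u_\mu$ and integrating over $\R^N$. Since $u_\mu \in C^\infty(R_T) \cap W^{1,1}(R_T)$ by the uniform parabolicity produced by the $\mu$-term, divergence-form terms vanish upon integration and all integrations by parts are legitimate.

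For the $L^\infty$ bound \eqref{eq:linfty-sd}, I would argue via a Kato-type inequality for the upper bound. Fix a smooth convex approximation $\phi_\delta$ of $s \mapsto s_+$ with $\phi_\delta'' \ge 0$, $\phi_\delta' \ge 0$ and $\phi_\delta'$ vanishing on $(-\infty,0]$, and study $t \mapsto \int_{\R^N} \phi_\delta(u_\mu(t,x)-1)\,dx$. Multiplying \eqref{eq:sd-mu} by $\phi_\delta'(u_\mu-1)$ and integrating: the flux contribution is a pure divergence and integrates to zero; one integration by parts turns the diffusion contribution into $-\eps\int \phi_\delta''(u_\mu-1)(a_\mu(u_\mu)+\mu)|\nabla u_\mu|^2\,dx \le 0$; and the expansion $g_\mu(t,x,u_\mu) = g_\mu(t,x,1) + \int_1^{u_\mu} g_{\mu,u}(t,x,s)\,ds$ together with $\eqref{e:g01}_\mu$ and $\eqref{e:gu}_\mu$ yields $g_\mu(t,x,u_\mu) \le \kappa (u_\mu-1)_+$ on the support of $\phi_\delta'(u_\mu-1)$. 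Sending $\delta \to 0$ produces $\tfrac{d}{dt}\!\int (u_\mu-1)_+\,dx \le \kappa \int (u_\mu-1)_+\,dx$; the initial condition $u_{0,\mu}\le 1$ from \eqref{eq:ass-mu1} and Gronwall's lemma then force $u_\mu \le 1$ a.e. The lower bound $u_\mu \ge 0$ is entirely analogous, using $g_\mu(\cdot,\cdot,0)\ge 0$.

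Once \eqref{eq:linfty-sd} is in hand, \eqref{eq:l1-sd} is immediate: integrating \eqref{eq:sd-mu} over $\R^N$ kills the divergence and Laplacian terms, and from $u_\mu \ge 0$ one obtains $\tfrac{d}{dt}\|u_\mu(t,\cdot)\|_{L^1(\R^N)} = \int g_\mu(t,x,u_\mu(t,x))\,dx$, whose absolute value is bounded by $\kappa$ uniformly in $t$ via $\eqref{e:gdu}_\mu$; integrating in time and using $\|u_{0,\mu}\|_{L^1}\le\|u_0\|_{L^1}$ from \eqref{eq:ass-mu345} concludes. For the energy estimate \eqref{eq:l2-sd}, I would multiply \eqref{eq:sd-mu} by $u_\mu$ and integrate over $\R^N$: the convective term vanishes because $u_\mu\,\diver f_\mu(u_\mu) - \diver(u_\mu f_\mu(u_\mu)) = -f_\mu(u_\mu)\cdot\nabla u_\mu = -\diver F(u_\mu)$ for the componentwise primitive $F_i(u) = \int_0^u (f_\mu)_i(s)\,ds$, both divergences integrating to zero by the decay of $u_\mu$; integration by parts on the diffusion term produces exactly $\eps\int (a_\mu(u_\mu)+\mu)|\nabla u_\mu|^2\,dx$ on the left; and the source term contributes at most $\|u_\mu\|_{L^\infty}\|g_\mu(t,\cdot,u_\mu)\|_{L^1}\le \kappa$ by \eqref{eq:linfty-sd} and $\eqref{e:gdu}_\mu$. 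Integrating in time and using $\|u_{0,\mu}\|_{L^2}\le\|u_0\|_{L^2}$ from \eqref{eq:ass-mu345} delivers \eqref{eq:l2-sd}.

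The main delicate point is the Kato step for \eqref{eq:linfty-sd}: one must perform all manipulations at the regularized level $\phi_\delta$, carefully exploiting that $\phi_\delta(u_\mu-1)$ is integrable thanks to the $W^{1,1}(R_T)$ regularity of $u_\mu$ and that $\phi_\delta'(u_\mu-1)$ localizes exactly on $\{u_\mu>1\}$ to allow the sign cancellation in $g_\mu$, only letting $\delta\to 0$ after the Gronwall inequality has been formed. The other two estimates are then routine moment identities.
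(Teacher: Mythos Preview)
Your proof is correct. The arguments for \eqref{eq:l1-sd} and \eqref{eq:l2-sd} are essentially identical to the paper's: integrate the equation (respectively, multiply by $u_\mu$ and integrate), use that the convective term is a pure divergence, bound the source by $\kappa$ via $\eqref{e:gdu}_\mu$ and \eqref{eq:linfty-sd}, and invoke \eqref{eq:ass-mu345}.

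For \eqref{eq:linfty-sd} the paper takes a shorter route than you do. Since the $\mu$-regularization makes \eqref{eq:sd-mu} uniformly parabolic, the constant functions $u\equiv 0$ and $u\equiv 1$ are, by $\eqref{e:g01}_\mu$, a subsolution and a supersolution respectively; the classical Comparison Principle for nondegenerate parabolic equations together with \eqref{eq:ass-mu1} then yields $0\le u_\mu\le 1$ in one line. Your Kato--Gronwall argument is in effect a direct, self-contained proof of this special instance of the comparison principle: it avoids invoking an external theorem at the cost of a longer computation. Both approaches are valid, and yours has the minor advantage of not requiring any black-box parabolic theory.
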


\begin{proof}
First, the constant functions $u=0$ and $u=1$ are subsolution and supersolution, respectively, of \eqref{eq:sd-mu} by $\eqref{e:g01}_\mu$. Then \eqref{eq:linfty-sd} follows by $\eqref{eq:ass-mu1}$ and the Comparison Principle for (nondegenerate) parabolic equations.

Second, by \eqref{eq:linfty-sd} we have
\begin{align*}
\frac{d}{dt}\int_{\R^N} |u_\mu|dx=&\frac{d}{dt}\int_{\R^N} u_\mu dx=\int_{\R^N} \pt u_\mu dx
\\
=&-\int_{\R^N} \dv{f_\mu(u_\mu)} dx+\eps\int_{\R^N} \dv{\left(a_\mu(u_\mu) + \mu\right)\nabla u_\mu} dx + \int_{\R^N} g_\mu(t,x,u_\mu)dx\le \kappa
\end{align*}
by $\eqref{e:gdu}_{\mu,1}$, because the first two terms in the last line vanish. An integration over $(0,t)$ and $\eqref{eq:ass-mu345}_1$ prove \eqref{eq:l1-sd}.

To prove \eqref{eq:l2-sd} we denote $h_\mu(u) = \int^u f_\mu'(s)s\, ds$. Then, by \eqref{eq:linfty-sd}, $\eqref{e:gdu}_{\mu,1}$ and $\ref{ass:a}_\mu$ we have
\begin{align*}
\frac{d}{dt}\int_{\R^N}& \frac{u_\mu^2}{2}dx=\int_{\R^N} u_\mu \pt u_\mu dx
\\
=&-\int_{\R^N} \dv{f_\mu(u_\mu)} u_\mu dx+\eps\int_{\R^N} \dv{(a_\mu(u_\mu)+\mu)\nabla u_\mu} u_\mu dx +\int_{\R^N} g_\mu(t,x,u_\mu)u_\mu dx
\\
=&\underbrace{-\int_{\R^N} \dv{h_\mu(u_\mu)} dx}_{=0}
-\eps\int_{\R^N} \left(a_\mu(u_\mu)+\mu\right)|\nabla u_\mu|^2 dx +\int_{\R^N} g_\mu(t,x,u_\mu)u_\mu dx
\\
\le&-\eps\int_{\R^N} \left(a_\mu(u_\mu)+\mu\right)|\nabla u_\mu|^2 dx +\kappa.
\end{align*}
An integration over $(0,t)$ and $\eqref{eq:ass-mu345}_2$ prove \eqref{eq:l2-sd}.
\end{proof}
%%%

By multiplying equation $\eqref{eq:sd-mu}_1$ by $\eta'(u_\mu)$, with If $\eta\in C^2(\R)$, and then integrating by parts we deduce
\begin{align*}
&\eps\int\!\!\!\int_{R_T}\eta''(u_\mu)\left(a_\mu(u_\mu) + \mu\right)\left|\nabla u_\mu\right|^2dtdx 
\\
&= 
\int\!\!\!\int_{R_T}\eta'(u_\mu)g_\mu(t,x,u_\mu)\,dtdx 
- \int_{\R^N}\eta\left(u_\mu(T,x)\right)\, dx + \int_{\R^N}\eta\left(u_\mu(0,x)\right)\, dx.
\end{align*}
%\footnote{C'era anche questo che mi pare non serva (ed e' sbagliato: $\eta$ non si annulla in $0$): 
%
%Moreover, by \eqref{eq:linfty-sd} we deduce $\left| \eta(u_\mu)\right|\le C_1\left| u_\mu\right|$, for $C_j = \max_{\xi\in[0,1]}|\eta^{(j)}(\xi)|$, $j=1,2$. By \eqref{eq:l1-sd} it follows
%\[
%\left|\int_{\R^N}\eta\left(u_\mu(t,x)\right)\, dx\right|\le C_1\norm{u_0}_{L^1(\R^N)}+\kappa T,
%\]
%for $t\in[0,T]$. Analogously,
%\[
%\left|\int\!\!\!\int_{R_T}\eta'(u_\mu)g_\mu(t,x,u_\mu)\,dtdx\right| \le C_1 \int\!\!\!\int_{R_T}|g_\mu(t,x,u_\mu)|\,dtdx\le C_1\kappa T,
%\]
%by $\eqref{e:gdu}_{\mu,1}$.} 
If we denote
\[
\nu_\mu := \eps\eta''(u_\mu)\left(a_\mu(u_\mu) + \mu\right)\left|\nabla u_\mu\right|^2,
\]
then by \eqref{eq:l2-sd} we deduce 
\[
\int\!\!\!\int_{R_T}|\nu_\mu| dtdx\le C_2\left(\norm{u_0}_{L^2(\R^N)}^2+2\kappa T\right)=:\bar C,
\]
where $C_2 = \max_{\xi\in[0,1]}|\eta''(\xi)|$. As a consequence, unless of extracting a subsequence, we have that $\nu_\mu\to \nu$ for some $\nu\in \mathcal{M}(R_T)$. Moreover, we have $\nu\ge0$ by $\ref{ass:a}_\mu$ if $\eta$ is convex. Notice that $\nu_\mu$ also depend on $\eps$ but it is uniformly bounded also with respect to $\eps$. As a consequence, the limit measure, that we could write $\nu=\nu_\eps$, satisfies
\begin{equation}\label{e:nuepsbdd}
0\le\nu_\eps(R_T)\le \bar C,
\end{equation}
where the constant $\bar C$ does not depend on $\eps$.

%%%
\begin{lemma}[{\bf $BV$ estimates}]
\label{lm:BVx-sd}
For every $t\in[0,T)$ and $i\in\{1,..,N\}$ we have
\begin{align}
\label{eq:BVx-sd}
\norm{\p_{x_i} u_\mu(t,\cdot)}_{L^1(\R^N)} & \le |D(u_0)|(\R^N)\, e^{\kappa t}+e^{\kappa t}-1,
\\
\label{eq:BVt-sd}
\norm{\pt u_\mu(t,\cdot)}_{L^1(\R^N)} & \le \left(\norm{f'}_{L^\infty(0,1)}|D(u_0)|(\R^N) +\eps \kappa_0+\eps \left|D(A(u_{0})')\right|(\R^N)+\kappa\right)e^{\kappa t}+e^{\kappa t}-1.
\end{align}
\end{lemma}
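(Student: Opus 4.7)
The plan is to differentiate the regularized equation $\eqref{eq:sd-mu}_1$ in $x_i$ (respectively in $t$), test the resulting linear parabolic equation against a smooth approximation $S_\delta$ of $\sgn{\cdot}$, and pass to the limit $\delta\to 0^+$ to produce a differential inequality for the $L^1(\R^N)$-norm of the relevant derivative. Gronwall's lemma, combined with the bounds on the initial data supplied by \eqref{eq:ass-mu345}--\eqref{eq:ass-mu67}, then yields the exponential-in-$t$ estimates.

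To prove \eqref{eq:BVx-sd}, I set $v_i := \p_{x_i}u_\mu$ and differentiate $\eqref{eq:sd-mu}_1$ in $x_i$, writing the diffusion in divergence form as $\eps\,\dv{(a_\mu(u_\mu)+\mu)\nabla u_\mu}$. This yields
\[
\pt v_i + \dv{f_\mu'(u_\mu)\, v_i} = \eps\,\dv{(a_\mu(u_\mu)+\mu)\nabla v_i} + \eps\,\dv{a_\mu'(u_\mu)\, v_i\, \nabla u_\mu} + g_{\mu,x_i}(t,x,u_\mu) + g_{\mu,u}(t,x,u_\mu)\, v_i.
\]
Choose $S_\delta\in C^1(\R)$ nondecreasing with $|S_\delta|\le 1$ and $S_\delta(r)\to\sgn{r}$ as $\delta\to 0^+$, and set $\Sigma_\delta(r):=\int_0^r S_\delta$ and $\Phi_\delta(r):=\int_0^r sS_\delta'(s)\,ds = rS_\delta(r)-\Sigma_\delta(r)$; note that $\Sigma_\delta(r)\to|r|$ and $\Phi_\delta(r)\to 0$ pointwise, with $|\Sigma_\delta(r)|,|\Phi_\delta(r)|\le 2|r|$. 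Multiplying by $S_\delta(v_i)$ and integrating over $\R^N$, the convective term collapses after integration by parts to $\int(S_\delta(v_i)v_i-\Sigma_\delta(v_i))\,\dv{f_\mu'(u_\mu)}\,dx$, which tends to $0$ by dominated convergence; the leading diffusive term becomes $-\eps\int S_\delta'(v_i)(a_\mu(u_\mu)+\mu)|\nabla v_i|^2\,dx\le 0$; and the cross diffusive term rewrites, after two integrations by parts and the identity $S_\delta'(v_i)v_i\nabla v_i = \nabla\Phi_\delta(v_i)$, as $\eps\int\Phi_\delta(v_i)\,\dv{a_\mu'(u_\mu)\nabla u_\mu}\,dx$, which also vanishes in the limit. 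Using \eqref{e:gu} and the uniform bound $\int_{\R^N}|g_{\mu,x_i}(t,x,u_\mu(t,x))|\,dx\le\kappa$ (from $\eqref{e:gdu}_\mu$) for the source, I obtain
\[
\frac{d}{dt}\norm{v_i(t,\cdot)}_{L^1(\R^N)}\le\kappa+\kappa\norm{v_i(t,\cdot)}_{L^1(\R^N)},
\]
and Gronwall together with $\norm{v_i(0,\cdot)}_{L^1(\R^N)}\le|D(u_0)|(\R^N)$ from $\eqref{eq:ass-mu345}_3$ delivers \eqref{eq:BVx-sd}.

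For \eqref{eq:BVt-sd}, $w:=\pt u_\mu$ satisfies an equation of the same form as $v_i$, with $g_{\mu,t}(t,x,u_\mu)$ in place of $g_{\mu,x_i}(t,x,u_\mu)$, so the $S_\delta$-argument above yields $\frac{d}{dt}\norm{w(t,\cdot)}_{L^1(\R^N)}\le\kappa+\kappa\norm{w(t,\cdot)}_{L^1(\R^N)}$. To bound $\norm{w(0,\cdot)}_{L^1(\R^N)}$ I evaluate the PDE at $t=0$,
\[
\pt u_\mu(0,x) = -\dv{f_\mu(u_{0,\mu})} + \eps\,\Delta\bigl(A_\mu(u_{0,\mu})+\mu u_{0,\mu}\bigr) + g_\mu(0,x,u_{0,\mu}),
\]
and estimate the four pieces using \eqref{e:fextramu} with $\eqref{eq:ass-mu345}_3$, the two parts of \eqref{eq:ass-mu67}, and the $L^1$-estimate in $\eqref{e:gdu}_\mu$, arriving at the prefactor $\norm{f'}_{L^\infty(0,1)}|D(u_0)|(\R^N)+\eps|D(A(u_0)')|(\R^N)+\eps\kappa_0+\kappa$. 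Another application of Gronwall proves \eqref{eq:BVt-sd}. The main obstacle is the rigorous $\delta\to 0^+$ step for the cross diffusive term $\eps\int\Phi_\delta(v_i)\,\dv{a_\mu'(u_\mu)\nabla u_\mu}\,dx$; a dominated-convergence argument using the smoothness of $u_\mu$ to control $\dv{a_\mu'(u_\mu)\nabla u_\mu}=a_\mu''(u_\mu)|\nabla u_\mu|^2+a_\mu'(u_\mu)\Delta u_\mu$ is required. A secondary technical point is converting the pointwise-in-$u$ bound $|Dg_\mu(t,\cdot,u)|(\R^N)\le\kappa$ into the bound $\int|g_{\mu,x_i}(t,x,u_\mu(t,x))|\,dx\le\kappa$ at the varying value $u=u_\mu(t,x)$, which follows by choosing the mollifier in \eqref{e:gmuconv} so that $\sup_{u\in[0,1]}|g_{\mu,x_i}(t,\cdot,u)|$ is $L^1$-dominated by $\kappa$.
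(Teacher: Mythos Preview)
Your proof is correct and follows essentially the same route as the paper: differentiate \eqref{eq:sd-mu} in $x_i$ (resp.\ $t$), test against the sign of the derivative, discard the convective and cross-diffusive contributions, keep the nonpositive leading diffusion, bound the source by $\kappa+\kappa\|\cdot\|_{L^1}$, and apply Gronwall together with the initial-data bounds \eqref{eq:ass-mu345}--\eqref{eq:ass-mu67}. The only difference is presentational: the paper writes the formal computation with the symbolic $\delta_{\{\p_{x_i}u_\mu=0\}}$, whereas you carry out the rigorous $S_\delta$-regularization that underlies it; you also correctly flag the subtlety about bounding $\int_{\R^N}|g_{\mu,x_i}(t,x,u_\mu(t,x))|\,dx$ from the fixed-$u$ hypothesis $\eqref{e:gdu}_{\mu,2}$, a point the paper passes over.
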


\begin{proof}
We first prove \eqref{eq:BVx-sd}. By differentiating the equation in \eqref{eq:sd-mu} with respect to $x_i$ we get
\begin{align*}
\pt\p_{x_i} u_\mu&+\dv{f_\mu'(u_\mu)\p_{x_i} u_\mu}\\
=&\eps\dv{(a_\mu(u_\mu)+\mu)\nabla\p_{x_i} u_\mu}+\eps\dv{a_\mu'(u_\mu)\p_{x_i} u_\mu\nabla u_\mu}+g_{\mu,x}(t,x,u_\mu)+g_{\mu,u}(t,x,u_\mu)\p_{x_i} u_\mu.
\end{align*}
We have
\begin{align*}
\frac{d}{dt}\int_{\R^N}& |\p_{x_i} u_\mu|dx=\int_{\R^N} \pt\p_{x_i} u_\mu \sgn{\p_{x_i} u_\mu}dx
\\
=&-\int_{\R^N} \dv{ f_\mu'(u_\mu)\p_{x_i} u_\mu} \sgn{\p_{x_i} u_\mu}dx+\eps\int_{\R^N} \dv{(a_\mu(u_\mu)+\mu)\nabla\p_{x_i} u_\mu} \sgn{\p_{x_i} u_\mu}dx\\
&+\eps\int_{\R^N} \dv{a_\mu'(u_\mu)\p_{x_i} u_\mu\nabla u_\mu} \sgn{\p_{x_i} u_\mu}dx
\\
&+\int_{\R^N} g_{\mu,x_i}(t,x,u_\mu)\,\sgn{\p_{x_i} u_\mu}dx+\int_{\R^N} g_{\mu,u}(t,x,u_\mu)|\p_{x_i} u_\mu| dx
\\
=&\underbrace{\int_{\R^N} f_\mu'(u_\mu)\p_{x_i} u_\mu\nabla\p_{x_i} u_\mu \delta_{\{\p_{x_i} u_\mu=0\}}dx}_{=0}\,
\underbrace{-\eps\int_{\R^N} \left(a_\mu(u_\mu)+\mu\right)|\nabla\p_{x_i} u_\mu|^2 \delta_{\{\p_{x_i} u_\mu=0\}}dx}_{\le 0}
\\
&\underbrace{-\eps\int_{\R^N} a_\mu'(u_\mu)\p_{x _i}u_\mu\nabla u_\mu\cdot\nabla\p_{x_i} u_\mu \delta_{\{\p_{x_i} u_\mu=0\}}dx}_{=0}
\\
&+\int_{\R^N} g_{\mu,x_i}(t,x,u_\mu)\sgn{\p_{x_i} u_\mu}dx+\int_{\R^N} g_{\mu,u}(t,x,u_\mu)|\p_{x_i} u_\mu| dx
\\
\le &\ \kappa+\kappa\int_{\R^N} |\p_{x_i} u_\mu| dx,
\end{align*}
by $\ref{ass:a}_\mu$, $\eqref{e:gdu}_{\mu,2}$ and $\eqref{e:gu}_\mu$. The Gronwall Lemma  and $\eqref{eq:ass-mu345}_3$ prove \eqref{eq:BVx-sd}.

The proof of \eqref{eq:BVt-sd} is analogous. Differentiating the equation in \eqref{eq:sd-mu} with respect to $t$ and proceeding as above we easily deduce, by exploiting $\eqref{e:gdu}_{\mu,3}$ instead of $\eqref{e:gdu}_{\mu,2}$,
\[
\frac{d}{dt}\int_{\R^N} |\pt u_\mu|dx \le \kappa+\kappa\int_{\R^N} |\pt u_\mu| dx.
\]
%we get
%\begin{align*}
%\ptt u_\mu+\px\left( f'(u_\mu)\pt u_\mu\right)=&\eps\px\left((a(u_\mu)+\mu)\ptx u_\mu\right) + \eps\px\left(a'(u_\mu)\pt u_\mu\px u_\mu\right)\\
%&+g_t(t,x,u_\mu)+g_u(t,x,u_\mu)\pt u_\mu.
%\end{align*}
%We have
%\begin{align*}
%\frac{d}{dt}\int_\R |\pt u_\mu|dx=&\frac{d}{dt}\int_\R \ptt u_\mu \sgn{\pt u_\mu}dx\\
%=&-\int_\R \px\left( f'(u_\mu)\pt u_\mu\right) \sgn{\pt u_\mu}dx+\eps\int_\R \px\left((a(u_\mu)+\mu)\ptx u_\mu\right) \sgn{\pt u_\mu}dx\\
%&+\eps\int_\R \px\left(a'(u_\mu)\pt u_\mu\px u_\mu\right) \sgn{\pt u_\mu}dx\\
%&+\int_\R g_t(t,x,u_\mu)\sgn{\pt u_\mu}dx+\int_\R g_u(t,x,u_\mu)|\pt u_\mu| dx\\
%=&\underbrace{\int_\R f'(u_\mu)\pt u_\mu\ptx u_\mu \delta_{\{\pt u_\mu=0\}}dx}_{=0}
%\underbrace{-\eps\int_\R \left(a(u_\mu)+\mu\right)(\ptx u_\mu)^2 \delta_{\{\pt u_\mu=0\}}dx}_{\le 0}\\
%&\underbrace{-\eps\int_\R a'(u_\mu)\pt u_\mu\px u_\mu\ptx u_\mu \delta_{\{\pt u_\mu=0\}}dx}_{=0}\\
%&+\int_\R g_t(t,x,u_\mu)\sgn{\pt u_\mu}dx+\int_\R g_u(t,x,u_\mu)|\pt u_\mu| dx\\
%\le &\kappa+\kappa\int_\R |\pt u_\mu| dx.
%\end{align*}
Thanks to Gronwall Lemma, the equation in \eqref{eq:sd-mu}, \eqref{eq:ass-mu1}--\eqref{eq:ass-mu67},  \eqref{e:fextramu} and \eqref{eq:linfty-sd} we have 
\begin{align*}
&\norm{\pt u_\mu(t,\cdot)}_{L^1(\R^N)}\le \norm{\pt u_\mu(0,\cdot)}_{L^1(\R^N)} e^{\kappa t}+e^{\kappa t}-1
\\
&=\norm{-f_\mu'(u_{0,\mu})u_{0,\mu}'+\eps\mu u_{0,\mu}''+\eps A_\mu(u_{0,\mu})'' + g_\mu(0,\cdot, u_{0,\mu})}_{L^1(\R^N)}e^{\kappa t}+e^{\kappa t}-1
\\
&\le\left(\norm{f_\mu'}_{L^\infty(0,1)}\norm{u_{0,\mu}'}_{L^1(\R^N)}+\eps\mu \norm{u_{0,\mu}''}_{L^1(\R^N)}+\eps \norm{A_\mu(u_{0,\mu})''}_{L^1(\R^N)}
+\norm{g_\mu(0,\cdot,u_{0,\mu})}_{L^1(\R^N)}\right)e^{\kappa t}
\\
&\quad +e^{\kappa t}-1
\\
&\le \left(\norm{f'}_{L^\infty(0,1)}|D(u_0)|(\R^N)+\eps \kappa_0+\eps \left|D(A(u_{0})')\right|(\R^N) +\kappa\right)e^{\kappa t}+e^{\kappa t}-1,
\end{align*}
by $\eqref{eq:ass-mu345}_3$, $\eqref{eq:ass-mu67}_2$, $\eqref{eq:ass-mu67}_1$ and $\eqref{e:gdu}_1$. This proves \eqref{eq:BVt-sd}.
\end{proof}

\begin{proof}[Proof of Theorem \ref{th:main-sd}]
By \eqref{eq:l1-sd}, \eqref{eq:BVx-sd}, \eqref{eq:BVt-sd} and \cite[Theorem 3.2.3]{Ambrosio-Fusco-Pallara} the family $\{u_\mu\}$ converges (unless of choosing a subsequence) to some function $u$. The function $u$ solves the equation in \eqref{eq:sd} by the Dominated Convergence Theorem: this follows by \eqref{e:gmuconv0}, \eqref{e:gmuconv} and by noticing that for any compact set $K\subset R_T$ we have, by $\eqref{e:gu}_\mu$ and \eqref{e:gmuconv},
\begin{align*}
&\int\!\!\!\int_K\left|g_\mu(t,x,u_\mu)-g(t,x,u)\right| dtdx 
\\ 
&\le \int\!\!\!\int_K\left|g_\mu(t,x,u_\mu)-g_\mu(t,x,u)\right| dtdx
+\int\!\!\!\int_K\left|g_\mu(t,x,u)-g(t,x,u)\right| dtdx
\\
& \le \kappa \int\!\!\!\int_K\left|u_\mu-u \right| dtdx +\sup_{\xi\in[0,1]}\int\!\!\!\int_K\left|g_\mu(t,x,\xi)-g(t,x,\xi)\right| dtdx.
\end{align*}
Estimate \eqref{eq:th-main-sd1.1} trivially follows from \eqref{eq:linfty-sd}. The initial data are assumed in the limit sense because of \eqref{eq:ass-mu2}.

Estimate \eqref{eq:th-main-sd1.2} is obtained by writing it for $u_\mu$, applying \eqref{eq:BVt-sd} and then passing to the limit using again the Dominated Convergence Theorem; more precisely we find 
\[
L=\left(\norm{f'}_{L^\infty(0,1)}|D(u_0)|(\R^N) +\eps \kappa_0+\eps \left|D(A(u_{0})')\right|(\R^N)+\kappa\right)e^{\kappa T}+e^{\kappa T}-1. 
\]
To prove that  $u\in L^\infty(0,T; BV(\R^N))$ we observe that $u_{\mu,t}\in L^\infty([0,T); L^1(\R^N))$ by \eqref{eq:BVt-sd}, hence $u_{\mu,t}\in L^p([0,T); L^1(\R^N))$ for every $p\in[1,\infty]$. Then $u_t\in L^\infty([0,T); \mathcal{M}(\R^N))$, hence $u\in BV(R_T)$.

Property \eqref{eq:th-main-sd1.3} follows by writing equation $\eqref{eq:sd}_1$ as 
\[
\eps\dv{\nabla A(u)} = g(t,x,u)-\pt u-\dv{f(u)}.
\] 
The first summand on the right-hand side belongs to $L^\infty([0,T); L^1(\R^N))$ by \ref{ass:g}, the second and the third ones to $L^\infty([0,T); \mathcal{M}(\R^N))$ because of \eqref{eq:BVx-sd} and \eqref{eq:BVt-sd}. Then $\px A(u)\in L^\infty([0,T); BV(\R^N))$, i.e. \eqref{eq:th-main-sd1.3}.

At last, we have to prove \eqref{eq:th-main-sd2}. Let $u$ and $v$ be the entropy solutions of \eqref{eq:sd} corresponding to the initial data $u_0$ and $v_0$ satisfying \ref{ass:u0} and \eqref{eq:ass-u0}.
Using the doubling of variables \cite{Kruzhkov} we can prove
\begin{align*}
\pt |u-v|&+\dv{\sgn{u-v}(f(u)-f(v))}\\
\le& \eps\dv{\sgn{u-v}\nabla(A(u)-A(v))}+|g(t,x,u)-g(t,x,v)|\\
\le& \eps\dv{\sgn{u-v}\nabla(A(u)-A(v))}+\kappa |u-v|,
\end{align*}
by \eqref{e:gu} and then \eqref{eq:th-main-sd2}, whence uniqueness follows.
\end{proof}

%%%%%%%%%%%%%%%%%%%%%%%%%%%%%%%%%%%%%%%%%%%%%%%
\section{The vanishing diffusion limit}\label{sec:singlim}

In this section we prove Theorem \ref{th:main}. The following result is proved exactly as Lemma \ref{lm:linfty-sd}; indeed, the further regularity assumptions required there on $u_{0,\mu}$ are not needed. We rewrite it below for reference in this section. 

\begin{lemma}[{\bf $L^\infty$ and $L^2$ estimates}]
\label{lm:linfty-sd1}
Let $\ue$ be the entropy solution to problem \eqref{eq:sd1}. Then 
\begin{align}
\label{eq:linfty-sd-1}
&0\le \ue\le 1,\ a.e.\ in\ R_T,
\\
\label{eq:l2-sd1}
&\norm{\ue(t,\cdot)}_{L^2(\R^N)}^2+2\eps\int_0^t\!\!\!\int_{\R^N} a(\ue)|\nabla\ue|^2dsdx\le\norm{u_0}_{L^2(\R^N)}^2+2\kappa t, \quad t\in(0,T].
\end{align}
\end{lemma}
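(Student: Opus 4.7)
The plan is to apply the proof of Lemma \ref{lm:linfty-sd} to the nondegenerate regularization \eqref{eq:sd-mu} with an initial datum approximating $u_{0,\eps}$, and then to pass to the limit $\mu\to 0$. As the author notes, the $L^\infty$ and $L^2$ computations of Lemma \ref{lm:linfty-sd} use only the pointwise bound \eqref{eq:ass-mu1} and the $L^2$ bound $\eqref{eq:ass-mu345}_2$ on the initial datum, and not the $BV$ estimate $\eqref{eq:ass-mu345}_3$ nor the higher-order bounds \eqref{eq:ass-mu67}. So it is enough to approximate $u_{0,\eps}$ by $\{u_{0,\mu,\eps}\}\subset C_c^\infty(\R^N)$ satisfying $0\le u_{0,\mu,\eps}\le 1$, $\norm{u_{0,\mu,\eps}}_{L^2(\R^N)}\le \norm{u_{0,\eps}}_{L^2(\R^N)}\le \norm{u_0}_{L^2(\R^N)}$ (the last inequality coming from \eqref{ass:u-0}), and $u_{0,\mu,\eps}\to u_{0,\eps}$ in $L^p_{loc}$ as $\mu\to 0$.

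Denoting by $u_{\mu,\eps}\in C^\infty(R_T)$ the smooth solution of \eqref{eq:sd-mu} with this initial datum, I would reproduce verbatim the arguments leading to \eqref{eq:linfty-sd} and \eqref{eq:l2-sd}: the comparison principle, together with $\eqref{e:g01}_\mu$ and $0\le u_{0,\mu,\eps}\le 1$, yields $0\le u_{\mu,\eps}\le 1$; multiplying the PDE by $u_{\mu,\eps}$, integrating over $\R^N$, using that the convective term reduces to $\int \dv{h_\mu(u_{\mu,\eps})}\,dx=0$ with $h_\mu'(s)=f_\mu'(s)s$, and bounding the source term by $|u_{\mu,\eps}|\le 1$ together with $\eqref{e:gdu}_{\mu,1}$ produces
\[
\norm{u_{\mu,\eps}(t,\cdot)}_{L^2(\R^N)}^2 + 2\eps\int_0^t\!\!\int_{\R^N}\bigl(a_\mu(u_{\mu,\eps})+\mu\bigr)|\nabla u_{\mu,\eps}|^2\,ds\,dx \le \norm{u_0}_{L^2(\R^N)}^2 + 2\kappa t.
\]

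Finally, along the subsequence furnished by the compactness argument of Theorem \ref{th:main-sd} we have $u_{\mu,\eps}\to \ue$ a.e. in $R_T$, so \eqref{eq:linfty-sd-1} is immediate and Fatou's lemma controls $\norm{\ue(t,\cdot)}_{L^2(\R^N)}$. The one delicate point is the passage to the limit in the dissipation term. Introducing primitives $B$ and $B_\mu$ with $B'=\sqrt{a}$ and $B_\mu'=\sqrt{a_\mu+\mu}$, the displayed inequality says exactly that $\nabla B_\mu(u_{\mu,\eps})=\sqrt{a_\mu(u_{\mu,\eps})+\mu}\,\nabla u_{\mu,\eps}$ is uniformly bounded in $L^2(R_T)$. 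A weakly $L^2$-convergent subsequence is then identified, via the uniform convergence $B_\mu\to B$ on $[0,1]$ and the a.e.\ convergence $u_{\mu,\eps}\to\ue$, with $\nabla B(\ue)=\sqrt{a(\ue)}\,\nabla\ue$; weak $L^2$ lower semicontinuity of the norm then closes \eqref{eq:l2-sd1}. This identification of the weak limit is the only step that requires slightly more care than in Lemma \ref{lm:linfty-sd}, and I expect it to be the main (still routine) obstacle.
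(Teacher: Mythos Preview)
Your proposal is correct and follows exactly the route the paper indicates: the paper's own ``proof'' is the single remark that Lemma~\ref{lm:linfty-sd1} is proved just as Lemma~\ref{lm:linfty-sd}, since only \eqref{eq:ass-mu1} and $\eqref{eq:ass-mu345}_2$ enter the $L^\infty$ and $L^2$ computations. Your write-up supplies the details the paper omits, in particular the passage to the limit in the dissipation term via $B_\mu(u_{\mu,\eps})\to B(u_\eps)$ and weak $L^2$ lower semicontinuity, which is indeed the natural way to close \eqref{eq:l2-sd1}.

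One small clarification: when you say ``it is enough to approximate $u_{0,\eps}$ by $\{u_{0,\mu,\eps}\}$ satisfying [only the $L^\infty$ and $L^2$ bounds]'', be careful that the convergence $u_{\mu,\eps}\to u_\eps$ you later invoke is the one produced in the proof of Theorem~\ref{th:main-sd}, which relies on the $BV$ compactness and hence on $\eqref{eq:ass-mu345}_3$ and \eqref{eq:ass-mu67} for $u_{0,\mu,\eps}$. So you should take the \emph{same} approximating family used there (which already satisfies all of \eqref{eq:ass-mu1}--\eqref{eq:ass-mu67}); the point is only that the $L^\infty$/$L^2$ \emph{estimates} themselves do not use the extra regularity, not that the approximating data may be chosen without it.
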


We can now pass to the limit in a subsequence of the family $\{u_{\eps}\}_{\eps>0}$ of diffusive approximations \eqref{eq:sd1} by using the compactness result for multidimensional scalar conservation laws proved in \cite{Panov, Panov-err}.
The two key results on which the proof of Theorem \ref{th:main} is based are the following.

\begin{theorem}[\cite{Panov}] 
\label{th:comp} 
Let  $\{v_\nu\}_{\nu>0}$ be a family of functions defined on $R_T$. If $\{v_\nu\}_{\nu>0}$ lies in a bounded set of $L^\infty(R_T)$ and for every 
convex $C^2$ entropy function $\eta:\R\to\R$ with corresponding entropy flux $q:\R\to\R^N$  the family
$$
\left\{\pt\eta(v_\nu)+\dv{q(v_\nu)}\right\}_{\nu>0} 
$$  
lies in a compact set of $H^{-1}_{loc}(R_T)$, then there exist a sequence $\{\nu_n\}_{n\in\N}\subset(0,\infty),\,\nu_n\to 0,$ and a map $v\in L^\infty(R_T)$ such that
$$
v_{\nu_n}\to v\quad \text{a.e. and in }  L^p_{loc}(R_T),\,1\le p<\infty.
$$
\end{theorem}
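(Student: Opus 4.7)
The plan is to combine the Young measure representation of $L^\infty$ weak-$\ast$ limits with the localization principle for the H-measures of Tartar--Murat as extended by Panov to the multidimensional scalar setting. From the uniform bound in $L^\infty(R_T)$ I would first extract a subsequence $\{v_{\nu_n}\}$ and a weakly measurable family of probability measures $\{\mu_{t,x}\}_{(t,x)\in R_T}$ on $\R$ (the generated Young measure) such that, for every continuous $F:\R\to\R$,
\[
F(v_{\nu_n}) \weakstar \overline F(t,x) := \int_\R F(\lambda)\,d\mu_{t,x}(\lambda) \qquad \text{in } L^\infty(R_T) \text{ weak-}\ast .
\]
Setting $v(t,x):=\int_\R \lambda\,d\mu_{t,x}(\lambda)$, the conclusion of the theorem is equivalent to $\mu_{t,x}=\delta_{v(t,x)}$ for a.e. $(t,x)$. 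Indeed, once this Dirac reduction is in force, $|v_{\nu_n}-v|^p \weakstar \int_\R |\lambda-v|^p\,d\mu_{t,x}=0$ for every $p\in[1,\infty)$, which together with the $L^\infty$ bound upgrades weak convergence to strong $L^p_{loc}$ convergence and, along a further subsequence, to convergence a.e.

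To force the Young measure to be Dirac, I would associate to $\{v_{\nu_n}\}$ the nonnegative H-measure $\mu^H$ on $R_T\times S^N$ furnished by Panov's construction; working with the parametrized kinetic defects $\chi(v_{\nu_n}\le\lambda)-\int_{-\infty}^\lambda d\mu_{t,x}$ yields an auxiliary dependence on $\lambda\in\R$. By construction $\mu^H$ vanishes identically if and only if the Young measure is Dirac, so the task is to show $\mu^H\equiv 0$. The entropy hypothesis enters through the localization principle: precompactness of $\{\pt\eta(v_{\nu_n})+\dv{q(v_{\nu_n})}\}$ in $H^{-1}_{loc}(R_T)$ for every convex $\eta\in C^2(\R)$ (with $q'=\eta'f'$) forces $\mu^H$ to be annihilated by the symbol $\tau\eta'(\lambda)+\xi\cdot q'(\lambda)=\eta'(\lambda)\bigl(\tau+\xi\cdot f'(\lambda)\bigr)$. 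Choosing $\eta$ strictly convex so that $\eta'(\lambda)\neq 0$ confines the support of $\mu^H$ to the characteristic set $\{\tau+\xi\cdot f'(\lambda)=0\}\cap\{|\tau|^2+|\xi|^2=1\}$.

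The main obstacle is the final step, namely deducing that $\mu^H$ is in fact zero on this support. Letting $\eta$ range over a rich family of strictly convex $C^2$ entropies and combining the resulting localization constraints with a Plancherel-type identity on the cosphere, Panov's argument converts the constraints into an integral relation for $\mu^H$ whose only nonnegative solution is trivial; this reduction is the technical heart of his extension of H-measures to $\R^N$. In one spatial dimension the same step collapses to Tartar's div--curl / compensated compactness argument for a pair of well-chosen entropies, but for $N\ge 2$ the full Fourier-analytic machinery of H-measures is indispensable, and I would simply quote the relevant reduction lemmas from \cite{Panov-err, Panov} rather than re-derive them.
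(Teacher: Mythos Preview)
The paper does not prove this theorem at all: it is stated with attribution to \cite{Panov} and used as a black box in the proof of Theorem~\ref{th:main}. There is therefore no proof in the paper to compare your sketch against. Your outline---Young measure extraction, H-measure associated to the kinetic defects, localization via the entropy dissipation hypothesis, and reduction to a Dirac Young measure---is broadly faithful to Panov's strategy, and your final paragraph already concedes that the decisive reduction step is quoted from \cite{Panov, Panov-err} rather than re-derived.

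One substantive remark on your sketch: the step where you pass from the support constraint $\{\tau+\xi\cdot f'(\lambda)=0\}$ to $\mu^H\equiv 0$ requires a genuine-nonlinearity hypothesis on $f$, namely that $u\mapsto \xi\cdot f(u)$ is not affine on any nontrivial interval for each $\xi\in\R^N$. Without it the conclusion is simply false (take $f$ linear and a sequence oscillating along characteristics). The theorem as stated in the paper suppresses this hypothesis, and the paper only invokes it---as assumption \ref{ass:f'}---at the moment Theorem~\ref{th:comp} is applied. Your sketch implicitly relies on it in the last paragraph but never names it; if you are writing this up, make the dependence explicit.
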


\begin{theorem}[\cite{Murat:Hneg}]
\label{th:Murat} 
Let $\Omega$ be a open subset of $\R^N$, $N\ge 2$.  Suppose the sequence $\left\{\CL_n\right\}_{n\in \N}$ of distributions is bounded in $W^{-1,\infty}(\Omega)$.  Suppose also that
$$
\CL_n=\CL_{1,n} + \CL_{2,n},
$$
where $\left\{\CL_{1,n}\right\}_{n\in \N}$ lies in a compact subset   of $\Hneg(\Omega)$ and $\left\{\CL_{2,n}\right\}_{n\in \N}$ lies in a bounded subset of ${\mathcal M}_{loc}(\Omega)$.  Then
$\left\{\CL_n\right\}_{n\in \N}$ lies in a compact subset of $H^{-1}_{loc}(\Omega)$.
\end{theorem}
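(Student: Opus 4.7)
The plan is to localize via a smooth cutoff and then combine two complementary estimates on the measure part $\CL_{2,n}$: a \emph{boundedness} estimate in some $W^{-1,p}$ with $p>2$ (inherited from the $W^{-1,\infty}$ hypothesis) and a \emph{precompactness} estimate in some $W^{-1,q}$ with $q<2$ (inherited from the $\M_{loc}$ hypothesis via a dualized Sobolev embedding). Interpolating between these two estimates along the scale of negative Sobolev spaces will yield strong precompactness in $H^{-1}$.

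\textbf{Step 1 (localization).} Fix any $\varphi\in C_c^\infty(\Omega)$ and an open ball $B\Supset \supp\varphi$. Multiplication by $\varphi$ is continuous from $W^{-1,\infty}(\Omega)$ into $W^{-1,\infty}(B)$, from $H^{-1}_{loc}(\Omega)$ into $H^{-1}(B)$, and from $\M_{loc}(\Omega)$ into $\M(B)$, so all three hypotheses survive for $\{\varphi\CL_n\}$, $\{\varphi\CL_{1,n}\}$, $\{\varphi\CL_{2,n}\}$ on the bounded set $B$. Since precompactness of $\{\varphi\CL_{1,n}\}$ in $H^{-1}(B)$ is given, it suffices to prove precompactness of $\{\varphi\CL_{2,n}\}$ in $H^{-1}(B)$.

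\textbf{Step 2 (two estimates on $\varphi\CL_{2,n}$).} Writing $\varphi\CL_{2,n}=\varphi\CL_n-\varphi\CL_{1,n}$, and using that on the bounded domain $B$ one has the continuous embeddings $W^{-1,\infty}(B)\hookrightarrow W^{-1,p}(B)$ and $H^{-1}(B)\hookrightarrow W^{-1,p}(B)$ for every $p\ge 2$ (dualize $W^{1,p'}_0(B)\subset H^1_0(B)\subset W^{1,1}_0(B)$), one gets that $\{\varphi\CL_{2,n}\}$ is bounded in $W^{-1,p}(B)$ for every $p>2$. On the other hand, since $N\ge 2$ we can pick $q\in(1,N/(N-1))$, and then $q'>N$, so the Sobolev embedding $W^{1,q'}_0(B)\hookrightarrow C(\overline B)$ is compact. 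Schauder's theorem gives the dual compact embedding $\M(B)\hookrightarrow W^{-1,q}(B)$, so the $\M(B)$-bounded sequence $\{\varphi\CL_{2,n}\}$ is precompact in $W^{-1,q}(B)$.

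\textbf{Step 3 (interpolation).} Fix $p>2$ and $q\in(1,N/(N-1))$, and choose $\theta\in(0,1)$ with $\tfrac{1}{2}=\tfrac{\theta}{p}+\tfrac{1-\theta}{q}$. The interpolation inequality for negative Sobolev norms on $B$ (obtained by complex interpolation of the $L^{p'}$--scale, or directly from Hölder applied to $W^{1,r'}_0$),
$$
\|T\|_{H^{-1}(B)}\le C\,\|T\|_{W^{-1,p}(B)}^{\theta}\,\|T\|_{W^{-1,q}(B)}^{1-\theta},
$$
applied to differences $\varphi\CL_{2,n}-\varphi\CL_{2,m}$ along a subsequence that converges strongly in $W^{-1,q}(B)$ while staying bounded in $W^{-1,p}(B)$, shows the subsequence is Cauchy in $H^{-1}(B)$. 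This yields the required precompactness and completes the proof.

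\textbf{Main obstacle.} The two nontrivial technical inputs are the compact embedding $\M(B)\hookrightarrow W^{-1,q}(B)$ (which rests on the compactness of $W^{1,q'}_0(B)\hookrightarrow C(\overline B)$ for $q'>N$, hence the role of the assumption $N\ge 2$: it guarantees that one can take $q<2$) and the interpolation inequality between $W^{-1,q}$ and $W^{-1,p}$ with $H^{-1}$ as an intermediate space. Both are standard but must be invoked carefully; in particular, the dimension assumption $N\ge 2$ enters solely to make $q<N/(N-1)\le 2$ admissible in Step 2.
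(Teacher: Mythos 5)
The paper does not prove this statement; it is imported verbatim from Murat's paper \cite{Murat:Hneg}, so your proposal can only be judged on its own terms. Your overall strategy is the classical one: localize, obtain precompactness in $W^{-1,q}(B)$ for some $q<2$ from the measure bound via the compact embedding $\M(B)\hookrightarrow W^{-1,q}(B)$ (Schauder's theorem applied to $W^{1,q'}_0(B)\hookrightarrow C(\overline B)$, $q'>N$), obtain boundedness in $W^{-1,p}(B)$ for some $p>2$ from the $W^{-1,\infty}$ hypothesis, and interpolate through $H^{-1}=W^{-1,2}$. However, Step 2 contains a genuine error. The embedding $H^{-1}(B)\hookrightarrow W^{-1,p}(B)$ holds for $p\le 2$, \emph{not} for $p\ge 2$: since $W^{-1,p}(B)=\bigl(W^{1,p'}_0(B)\bigr)^{*}$, the inclusion $W^{1,p'}_0(B)\subset H^1_0(B)$ that you dualize requires $p'\ge 2$, i.e.\ $p\le 2$, and for $p>2$ a distribution $\diver{F}$ with $F\in L^2\setminus L^p$ generally fails to belong to $W^{-1,p}(B)$. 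Consequently you cannot deduce that $\varphi\CL_{2,n}=\varphi\CL_n-\varphi\CL_{1,n}$ is bounded in $W^{-1,p}(B)$ for any $p>2$: the term $\varphi\CL_{1,n}$ is only controlled in $H^{-1}(B)$, and the measure bound cannot supply such an estimate either (a Dirac mass lies in $W^{-1,s}$ only for $s<N/(N-1)\le 2$). So the first of your two estimates on $\varphi\CL_{2,n}$ is unavailable and Step 3 cannot be run on that sequence.

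The repair is to interpolate on $\varphi\CL_n$ itself rather than on $\varphi\CL_{2,n}$. The sequence $\{\varphi\CL_n\}$ is bounded in $W^{-1,p}(B)$ for every $p<\infty$ directly from the $W^{-1,\infty}$ hypothesis (only the harmless inclusion $W^{1,p'}_0(B)\subset W^{1,1}_0(B)$ is needed), and it is precompact in $W^{-1,q}(B)$ for $1<q<N/(N-1)$ because \emph{both} summands are: $\varphi\CL_{2,n}$ by your Schauder argument, and $\varphi\CL_{1,n}$ because it is precompact in $H^{-1}(B)$ and $H^{-1}(B)\hookrightarrow W^{-1,q}(B)$ continuously for $q\le 2$, which is the valid direction of the embedding. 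Your interpolation inequality of Step 3 (best justified by solving $-\Delta u=T$ in $B$ with zero boundary data, using $\|T\|_{W^{-1,s}(B)}\simeq\|\nabla u\|_{L^s(B)}$ and H\"older, rather than the vaguer justification you give), applied to differences $\varphi\CL_n-\varphi\CL_m$ along a subsequence converging in $W^{-1,q}(B)$, then yields a Cauchy sequence in $H^{-1}(B)$; a diagonal argument over a countable exhaustion of $\Omega$ finishes the proof. With this rearrangement your argument becomes the standard correct proof; as written, it has a gap.
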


We are now ready for the proof of Theorem \ref{th:main}.

\begin{proof}[Proof of Theorem \ref{th:main}]
Let $\eta:\R\to\R$ be any convex $C^2$ entropy function and $q:\R\to\R^N$ be the corresponding entropy flux. We claim that 
\begin{equation}\label{eq:LLL}
    \pt  \eta(\ue)+\dv{q(\ue)}
    =\underbrace{\eps\Delta\mathcal{A}(\ue)}_{=:\CL_{1,\eps}}
    \, \underbrace{-\nu_\eps}_{=: \CL_{2,\eps}}
     \, \underbrace{+\eta'(\ue)g(t,x,\ue) }_{=: \CL_{3,\eps}},
\end{equation}
where  $\CL_{1,\eps}$, $\CL_{2,\eps}$, $\CL_{3,\eps}$ are distributions. To prove \eqref{eq:LLL} we come back to the approximation used in \eqref{eq:sd-mu}; we now emphasize the dependence on $\eps$ and denote $u_{\mu,\eps} := u_\mu$. We multiply the equation in \eqref{eq:sd-mu} by
$\eta'(u_{\mu,\eps})$; then, identity 
\begin{align}\nonumber
    &\pt  \eta(\ume)+\dv{q(\ume)}
    \\
    &=\eps\dv{\eta'(\ume)\nabla A_\mu(\ume)}
    \, -\eps \eta''(\ume)\nabla \ume \cdot\nabla A_\mu(\ume)
     \, +\eta'(\ume)g_\mu(t,x,\ume),
\label{eq:LLLe}
\end{align}
surely holds in the strong sense for $u_{\mu,\eps}$, where now $\mathcal{L}_{j,\eps}$ are smooth functions for $j=1,2,3$ and $A=A_\mu$, $g=g_\mu$. Both terms $\eta(u_{\mu,\eps})$ and 
$q(u_{\mu,\eps})$ belong to $L_{loc}^1(R_T)$ and $L_{loc}^1(R_T;\R^N)$ uniformly with respect to $\eps$ because of \eqref{eq:linfty-sd}. The same holds for the last term $\eta'(u_{\mu,\eps})g_\mu(t,x,u_{\mu,\eps})$ by $\eqref{e:gdu}_{\mu,1}$. Moreover, $\eps\nu_\eps$ belongs to $L^1$ uniformly with respect to $\eps$, and $\mathcal{A}_\mu(u_{\mu,\eps})$ belongs to $L_{loc}^1(R_T)$ uniformly with respect to $\eps$. Then, by Theorem \ref{th:comp} we can pass to the limit for $\mu\to0$. We already know that $u_{\mu,\eps}\to u_{\eps}$ a.e.; then $\eta(u_{\mu,\eps})$ and $q(u_{\mu,\eps})$ weakly converge, $\eps\mathcal{A}(u_{\mu,\eps})\to\eps\mathcal{A}(u_\eps)$ and then $\eps\Delta\mathcal{A_\mu}(u_{\mu,\eps})\to \eps\Delta\mathcal{A}(u_{\eps})$ in $\mathcal{D}'(R_T)$. We deduce that $u_{\eps}$ satisfies equation \eqref{eq:LLL} in the weak sense. This proves the claim.

Next, we claim that for each $t>0$ we have
\begin{enumerate}[{\em (i)}]

\item $\CL_{1,\eps}\to 0$ in $H^{-1}(R_T)$;

\item $\{\CL_{2,\eps}\}_{\eps>0}$ is uniformly bounded in $\mathcal{M}(R_T)$;

\item $\{\CL_{3,\eps}\}_{\eps>0}$ is uniformly bounded in $L^1\left(R_T\right)$.

\end{enumerate}
About {\em (i)}, by \eqref{eq:linfty-sd-1} and \eqref{eq:l2-sd1} we deduce
\begin{align*}
\norm{\eps\nabla \mathcal{A}(\ue)}^2_{L^2(R_T;\R^N)}&\le\eps ^2\norm{\eta'}^2_{L^{\infty}(0,1)}\int_{0}^{t}\!\!\!\int_{\R^N} |\nabla A(\ue)|^2 dsdx
\\
&\le\eps^2\norm{\eta'}^2_{L^{\infty}(0,1)}\norm{a}_{L^{\infty}(0,1)}\int_{0}^{t}\!\!\!\int_{\R^N} a(\ue)|\nabla\ue|^2 dsdx\\
&\le\eps\norm{\eta'}^2_{L^{\infty}(0,1)}\norm{a}_{L^{\infty}(0,1)}\left(\norm{u_0}_{L^2(\R^N)}^2+ \kappa t\right)\to0.
\end{align*}
Since $\eps\nabla \mathcal{A}(\ue)\to 0$ in $L^2(R_T;\R^N)$, then $\eps\Delta \mathcal{A}(\ue)\to 0$ in $H^{-1}(R_T)$ and this proves {\em (i)}. Item {\em (ii)} follows by \eqref{e:nuepsbdd}.
At last, by  $\eqref{e:gdu}_1$ and \eqref{eq:linfty-sd-1} we have
\begin{align*}
\norm{\eta'(\ue)g(t,x,\ue)}_{L^1(R_T)}
&\le \norm{\eta'}_{L^{\infty}(0,1)}\int_{0}^{T}\!\!\!\int_{\R^N} |g(t,x,\ue)| dtdx\le \norm{\eta'}_{L^{\infty}(0,1)}\kappa T,
\end{align*}
which concludes the proof of the claim. Therefore, Theorem \ref{th:Murat} implies
\begin{equation}
\label{eq:GMC1-1}
    \text{$\left\{  \pt  \eta(\ue)+\dv{q(\ue)}\right\}_{\eps>0}$
    lies in a compact subset of $\Hneg\left(R_T\right)$.}\end{equation}
By \ref{ass:f'}, \eqref{eq:linfty-sd-1} and the inclusion \eqref{eq:GMC1-1}, the Theorem \ref{th:comp} implies the existence of a subsequence
$\{\uek\}_{k\in\N}\subset \{\ue\}_{\eps>0}$ that converges to a limit function $  u\in L^{\infty}(R_T)$ a.e. and in $L^p_{loc}(R_T)$, for every $1\le p<\infty$, as $k\to\infty$. This proves \eqref{eq:convu-1}.

Now, we prove that $u$ is a distributional solution to \eqref{eq:cl}, i.e., it satisfies \eqref{eq:def-weak-sd1}. We denote for short $u_k:=u_{\eps_{k}}$. Since $u_k$ satisfies \eqref{eq:sd1}, by \eqref{eq:def-weak-sd} we have
\begin{align*}&\int_{0}^{\infty}\!\!\!\int_{\R^N}\left(u_k \pt\phi + f(u_k)\cdot\nabla\phi\right)\,dtdx +\int_{\R^N}u_{k,0}(x)\phi(0,x)\, dx \\
&\qquad\quad+ \int_{0}^{\infty}\!\!\!\int_{\R^N}g(t,x,u_k)\,dtdx +\eps_k\int_{0}^{\infty}\!\!\!\int_{\R^N} A(u_k) \Delta\phi\, dtdx=0.
\end{align*}
Identity \eqref{eq:def-weak-sd1} follows by the Dominated Convergence Theorem because of \eqref{eq:convu-1}, \eqref{eq:linfty-sd-1} and \eqref{eq:l2-sd1}.
%\footnote{To prove that $\px\mathcal{A}(\ue)\in L^2$ and $\px\mathcal{A}(\ue)= \mathcal{A}'(\ue)\px \ue$, argue by approximation: both formulas hold for $u_{\mu,\eps}$ and then pass to the limit.}
We conclude by showing that $u$ satisfies inequality \eqref{eq:OHentropy}. Arguing as in \eqref{eq:LLL}, we obtain, in the weak sense,
\begin{align*}
\pt \eta(u_k) +\dv{q(u_k)} - \eta'(u_k)g(t,x,u_k)
=& \eps \Delta\mathcal{A}(u_k) - \eps\nu_\eps
\le \eps \Delta\mathcal{A}(u_k),
\end{align*}
because of \ref{ass:a} and \eqref{e:nuepsbdd}. Then, for every nonnegative function $\phi\in C_c^\infty(R_T)$ we have
\begin{align*}
\int\!\!\!\int_{R_T}\left( \eta(u_k)\pt\phi + q(u_k)\cdot\nabla\phi\right) dtdx &+ \int\!\!\!\int_{R_T}\eta'(u_k)g(t,x,u_k)\phi\, dtdx \\
&+ \int_{\R^N}\eta(u_{0,k})\phi(0,x)\, dtdx \ge \eps \int\!\!\!\int_{R_T}\mathcal{A}(u_k)\Delta\phi\, dtdx.
\end{align*}
Therefore, \eqref{eq:OHentropy} follows by the Dominated Convergence Theorem because of \eqref{ass-a}, \eqref{eq:convu-1}, \eqref{eq:linfty-sd-1} and \eqref{eq:l2-sd1}. Notice that since entropy solutions to \eqref{eq:cl} are unique, then the whole sequence converges, indeed. 
\end{proof}

%%%%%%%%%%%%%%%%%%%%%%%%%%%%%%%%%%%%%%%%%%%%%%%%
\def\cprime{$'$} \def\cydot{\leavevmode\raise.4ex\hbox{.}}

{\small
\bibliographystyle{abbrv}

\def\cprime{$'$} \def\cydot{\leavevmode\raise.4ex\hbox{.}}

}

\end{document}